\documentclass[reqno,11pt]{amsart}

\usepackage{amssymb}
\usepackage{mathrsfs}
\usepackage{cite}
\usepackage{amsfonts}

\hyphenation{mon-oid mon-oids} \hyphenation{ver-ti-ces}
\hyphenation{imm-er-sion imm-er-sions} \hyphenation{group-oid
group-oids} \hyphenation{in-duc-tive}

\def\Irr#1{\mathrm{Irr}(#1)}
\def\module#1{\mathrm{mod}\text{-}#1}

\def\wh{\widehat}

\def\MM{\ensuremath{{\mathscr M}}}
\def\pv#1{\ensuremath{{\bf#1}}}

\def\ZZ{\ensuremath{\mathbb Z}}

\def\inv{^{-1}}
\def\p{\varphi}

\def\J{\mathrel{{\mathscr J}}} 

\def\R{\mathrel{{\mathscr R}}} 
\def\Lrel{\mathrel{{\mathscr L}}} 
\def\H{\mathrel{{\mathscr H}}} 

\def\e<{\leq _{E}}

\def\ov#1{\ensuremath{\overline {#1}}}

\def\til#1{\ensuremath{\widetilde {#1}}}

\def\malce{\mathbin{\hbox{$\bigcirc$\rlap{\kern-8.3pt\raise0,50pt\hbox{$\mathtt{m}$}}}}}

\def\1sk{^{(1)}}

\def\to{\rightarrow}

\def\Jbelow#1{{#1}^{\downarrow}}
\def\Jnotup#1{{#1}^{\not\hskip 1pt\uparrow}}
\def\Hom{\mathrm{Hom}}
\def\Ext{\mathrm{Ext}}
\def\Ind{\mathrm{Ind}}
\def\Coind{\mathrm{Coind}}

%
\def\Thmname{Theorem}
\def\Propname{Proposition}
\def\Lemmaname{Lemma}
\def\Definitionname{Definition}
%
\newtheorem{Thm}{\Thmname}[section]
\newtheorem{Prop}[Thm]{\Propname}

\newtheorem{Lemma}[Thm]{\Lemmaname}
{\theoremstyle{definition}
}
{\theoremstyle{remark}
\newtheorem{Rmk}[Thm]{Remark}}
\newtheorem{Cor}[Thm]{Corollary}

{\theoremstyle{remark}
\newtheorem{Claim}{Claim}}

\numberwithin{equation}{section}

\title[The homology of regular semigroups]{The quiver of an algebra associated to the Mantaci-Reutenauer descent algebra and the homology of regular semigroups}

\author{Stuart Margolis\ and Benjamin Steinberg}
\address{Department of Mathematics\\
Bar Ilan University\\ 52900 Ramat Gan \\ Israel\and School of Mathematics and Statistics \\ Carleton University \\
1125 Colonel By Drive\\
Ottawa, Ontario  K1S 5B6 \\
Canada}
\thanks{The second author was supported in part by NSERC}
\email{margolis@math.biu.ac.il\and bsteinbg@math.carleton.ca}
\date{November 10, 2008}
\dedicatory{Dedicated to the memory of W.~D. Munn}

\keywords{quivers, descent algebras, regular semigroups, representation theory}
\subjclass[2000]{20M25,16G10,05E99}

\begin{document}
\begin{abstract}
We develop the homology theory of the algebra of a regular semigroup, which is a particularly nice case of a quasi-hereditary algebra in good characteristic.  Directedness is characterized for these algebras, generalizing the case of semisimple algebras studied by Munn and Ponizovksy. We then apply homological methods to compute (modulo group theory) the quiver of a right regular band of groups, generalizing Saliola's results for a right regular band.  Right regular bands of groups come up in the representation theory of wreath products with symmetric groups in much the same way that right regular bands appear in the representation theory of finite Coxeter groups via the Solomon-Tits algebra of its Coxeter complex.  In particular, we compute the quiver of Hsiao's algebra, which is related to the Mantaci-Reutenauer descent algebra.
\end{abstract}
\maketitle

\section{Introduction}
The algebras of (von Neumann) regular semigroups in good characteristic form a wide and natural class of a quasi-hereditary algebras.   Although this class of semigroup algebras may be unfamiliar to many representation theorists, they have surfaced in a number of papers over the past ten years~\cite{renner,Aguiar,Putcharep1,Putcharep5,Putcharep3,Brown1,Brown2,BHR,Saliola,teply,wang}.  Although the fact that these are quasi-hereditary algebras was first pointed out quite late in the game by Putcha~\cite{Putcharep3}, in essence many of the properties of quasi-hereditary algebras were discovered quite early on in semigroup theory in this setting.  For instance, the Munn-Ponizovsky description of the simple modules~\cite{CP,RhodesZalc} are exactly via construction of the standard modules and taking the minimal irreducible constituent in the partial order; the co-standard modules appear in the work of Rhodes and Zalcstein~\cite{RhodesZalc} (which was written in the 1960s); Nico~\cite{Nico1,Nico2} computed early on the bound on the global dimension that one would get from the theory of quasi-hereditary algebras.  The near matrix algebras of Du and Lin~\cite{renewmunn} are essentially the same thing as Munn algebras, introduced by W.~D.~Munn to study semigroup algebras~\cite{CP,RhodesZalc}; in the terminology of~\cite{renewmunn} semigroups algebras of regular semigroups have a bi-free standard system.  In fact, regular semigroup algebras have a canonical quasi-hereditary structure coming from their semigroup structure via a principal series.   Moreover, the associated semisimple algebras in the quasi-hereditary structure are group algebras over maximal subgroups.  So the whole quasi-hereditary structure is already there at the semigroup level.

Just as ordinary group representation theory does not end immediately after observing that group algebras are semisimple, one should not close the door on the representation theory of regular semigroups after observing their algebras are quasi-hereditary.  By passing to the algebra, without remembering the distinguished basis coming from the semigroup, one loses the information that is of interest to a semigroup theorist.  With this philosophy in mind, we do not even give the formal definition of a quasi-hereditary algebra so that workers in semigroups and combinatorics who need to deal with semigroup algebras do not have to assimilate a number of technical definitions from the theory of finite dimensional algebras, which in the context of regular semigroups are quite clear.    In this paper, we take very much the traditional viewpoint in semigroup theory that we want to answer questions modulo group theory.  In particular, we consider, say, the quiver of a semigroup algebra to be computed if we can determine the vertices and arrows modulo being able to compute any representation theoretic fact we need concerning finite groups.  Over algebraically closed fields of characteristic $0$, this assumption is not unreasonable.

Throughout, we make extensive usage of homological methods rather than ring theoretic methods. This is because regular semigroup algebras behave extremely well homologically, especially with respect to taking quotients coming from semigroup ideals, whereas in general it is almost impossible to write down explicitly primitive idempotents for these algebras.

As an application of our techniques, we compute the quiver of a right regular band of groups.  A right regular band is a semigroup satisfying the identities $x^2=x$ and $xyx= yx$.  The faces of a central hyperplane arrangement have the structure of a right regular band, something that  was taken advantage of by Bidigare \emph{et al}~\cite{BHR} and Brown ~\cite{Brown1,Brown2} to compute spectra of random walks on hyperplane arrangements.  See~\cite{Brown1,Brown2} for further examples of applications of right regular band algebras to probability.  Bidigare also discovered that if one takes the reflection arrangement associated to a finite reflection group $W$, then the $W$-invariants of the algebra of the associated right regular band is precisely Solomon's descent algebra; see~\cite{Brown2} for details.   This led Aguiar \emph{et al}~\cite{Aguiar} to develop an approach to the representation theory of finite Coxeter groups via right regular bands.  Saliola computed the quiver of a right regular band algebra and the projective indecomposables~\cite{Saliola}; for the former he used homological methods, whereas in the latter case he computed primitive idempotents.   Actually, all these papers consider the dual notion of left regular bands because they work with left modules.

A right regular band of groups is a regular semigroup in which each right ideal is two-sided.  Intuitively, these are semigroups with a grading by a right regular band so that each homogeneous component is a group.  Hsiao~\cite{Saliolafriend} associates a right regular band of groups $\Sigma_n^G$ to each finite group $G$.  The symmetric group $S_n$ acts by automorphisms on $\Sigma_n^G$ and in the case $G$ is abelian, Hsiao identifies the invariant algebra with the Mantaci-Reutenauer descent algebra~\cite{MantReut} for the wreath product $G\wr S_n$; in the case $G$ is non-abelian, he identifies the invariant algebra with an algebra recently introduced by Novelli and Thibon~\cite{Thibon}.  In this paper, we use homological methods to compute the quiver and the projective indecomposables for the algebra of a right regular band of groups in good characteristic.  As an example, we compute the quiver for the algebra of Hsiao's semigroup $\Sigma_n^G$~\cite{Saliolafriend}.

\section{Preliminaries}
The reader is referred~\cite[Appendix A]{qtheor} or~\cite{CP,Arbib} for background on finite semigroups.  Let $S$ be a finite monoid (in this paper, all monoids and groups are assumed to be finite).  Then $S$ is called \emph{regular} if, for all $s\in S$, there exists $t\in S$ with $sts=s$.  Notice that $st,ts$ are idempotents so in particular every principal left, right and two-sided ideal of a regular semigroup is generated by an idempotent.  Green's preorders are defined by
\begin{itemize}
\item $s\leq_{\J}t$ if $SsS\subseteq StS$;
\item $s\leq_{\R}t$ if $sS\subseteq tS$;
\item $s\leq_{\Lrel}t$ if $Ss\subseteq St$.
\end{itemize}
We write $s\J t$ if $SsS=StS$.  Similar notation is used for $\R$ and $\Lrel$.  One writes $s\H t$ if $s\Lrel t$ and $s\R t$.   Of course $\leq_{\J}$ descends to a partial order on $S/{\J}$.

The set of idempotents of $S$ is denoted $E(S)$.  If $e$ is an idempotent, we write $G_e$ for the group of units of the monoid $eSe$; equivalently $G_e$ is the $\H$-class of $e$.   It is called the \emph{maximal subgroup} of $S$ at $e$.  The following fact about finite semigroups is crucial~\cite[Appendix A]{qtheor}.
\begin{Prop}
Let $S$ be a finite monoid and $e,f\in E(S)$ be $\J$-equivalent idempotents, i.e., $SeS=SfS$.  Then $eSe\cong fSf$ and hence $G_e\cong G_f$.  Moreover, $eS, fS$ ($Se,Sf$) are isomorphic right (resp.~left) $S$-sets.
\end{Prop}

Another important property of finite semigroups is stability~\cite{qtheor}, which states that comparable principal right (left) ideals cannot generate the same two-sided ideal.

\begin{Prop}
Let $s,x$ belong to a finite monoid $S$. Then \[sx\J s \iff sx\R s\ \text{and}\ xs\J s\iff xs\Lrel s.\]
\end{Prop}

It follows easily from this that if $e\in E(S)$ and $J$ is the $\J$-class of $e$, then $J\cap eSe=G_e$, $eS\cap J$ is the $\R$-class of $e$ and $Se\cap J$ is the $\Lrel$-class of $e$.  We use these and other consequences of stability throughout the paper without comment.

\section{The homological theory of regular monoids}
In this section we begin by studying homological aspects of the
algebra of a
regular monoid.  Fix a field $k$ and a regular monoid $S$.  In
characteristic zero, many of the results we present here were deduced by
Putcha~\cite{Putcharep3} as a consequence of the algebra being
quasi-hereditary~\cite{quasihered,drozd}; see also~\cite{teply,wang,Nico1,Nico2}
for results on global dimension. In any characteristic, it is easy to see that
$kS$ is stratified in the sense of~\cite{CPS2} via a principal
series.  However, things are better behaved in general for regular
semigroups and there is no real need to work with principal series.
Moreover, our basic philosophy is to reduce things to computations with
groups.  For these reasons, we provide complete proofs of results that
can be deduced via other methods.

Let $J_1,\ldots,J_n$ be the collection of $\J$-classes of $S$.  Assume that we have ordered them so that $J_i\leq_{\J} J_{\ell}$ implies $i\leq \ell$.  Choose idempotents $e_1,\ldots,e_n$ with $e_i\in J_i$ and let $G_i$ be the maximal subgroup at $e_i$.  Define
\begin{align*}
\Jbelow{J_i} &= \{s\in S\mid s<_{\J} e_i\}\\
\Jnotup{J_i} &= \{s\in S\mid s\ngeq_{\J} e_i\}.
\end{align*}
Both $\Jbelow{J_i}$ and $\Jnotup{J_i}$ are ideals of $S$.  Notice that $\Jbelow{J_i}\subseteq \Jnotup{J_i}$ and $e_i\Jbelow{J_i}=e_i\Jnotup{J_i}$ (and dually).

A key property of regular semigroups is that if $I$ is an ideal, then $I^2=I$ since if $a\in I$ and $aba=a$, then $ba\in I$ and so $a=aba\in I^2$.

If $A$ is an algebra, $\module A$ will denote the category of finitely generated right $A$-modules.
The description of the simple modules for a finite semigroup are well
known, see for instance~\cite{CP,myirreps,Putcharep5,Putcharep3,RhodesZalc}.   We follow here the presentation and ideas of~\cite{myirreps}, which is the shortest and easiest accounting.  First note that by stability, $e_i(kS/k\Jbelow{J_i})e_i\cong kG_i$.  For each $i=1,\ldots, n$, define functors \[\Ind_i,\mathrm{Coind}_i\colon \module {kG_i}\to \module{kS/k\Jnotup{J_i}}\subseteq \module{kS/k\Jbelow{J_i}}\] by
\begin{align*}
\mathrm{Ind}_i(V) = V\otimes _{kG_i}e_ikS/k\Jnotup{J_i}= V\otimes _{kG_i}e_ikS/k\Jbelow{J_i}\\
\mathrm{Coind}_i(V) = \Hom_{kG_i}((kS/k\Jnotup{J_i})e_i,V) = \Hom_{kG_i}((kS/k\Jbelow{J_i})e_i,V).
\end{align*}
These functors are exact and are the respective left and right adjoints of the restriction functor $M\mapsto Me_i$ from $\module{kS/k\Jbelow{J_i}}\to \module{kG_i}$ (in fact $e_ikS/k\Jbelow{J_i}$ and $(kS/k\Jbelow{J_i})e_i$ are free $kG_i$-modules since $G_i$ acts freely on $e_iS\cap J$ and dually).  Also $\Ind_i(V)e_i\cong V\cong \Coind_i(V)e_i$.
The functor $\Ind_i$ preserves projectivity and the functor $\Coind_i$
preserves injectivity as functors to $\module{kS/k\Jbelow{J_i}}$ (but
not in general to $\module{kS}$).  Both functors preserve
indecomposability.

If $V$ is a simple $kG_i$-module, then it is known that $\Ind_i(V)$
has a unique maximal submodule $\mathrm{rad}(\Ind_i(V))$, which is in
fact the largest submodule annihilated by $e_i$ (or equivalently is the submodule of elements annihilated by $J_i$).  The quotient
$\til{V}=\Ind_i(V)/\mathrm{rad}(\Ind_i(V))$ is then a simple
$kS$-module and can be characterized as the unique simple $kS$-module
$M$ such that:
\begin{enumerate}
\item $e_i$ is $\leq_{\J}$-minimal with $Me_i\neq 0$;
\item $Me_i\cong V$ as $kG_i$-modules.
\end{enumerate}
Also one can show that $\til{V}$ is the socle of $\Coind_i(V)$ and can be described as $\Coind_i(V)e_ikS$.  One
calls $J_i$ the \emph{apex} of the simple $kS$-module $\til{V}$.  It
is known that every simple module for $kS$ has an apex, i.e., is of
the form $\til{V}$ for a unique $i$ and a  unique simple $kG_i$-module $V$.   See~\cite{myirreps}.   It is convenient to put a partial order on the simple $kS$-modules by setting $V\leq U$ if $V=U$ or the apex of $V$ is strictly $\J$-below the apex of $U$.  Assume that the characteristic of $k$ does not divide the order of any maximal subgroup of $S$.  Then we call this the \emph{canonical quasi-hereditary structure} on $kS$.  One can show that $kS$ is indeed quasi-hereditary~\cite{quasihered,Putcharep3,drozd} with respect to this partial ordering and that the modules of the form $\Ind_i(V)$ are the standard modules, whereas the modules $\Coind_i(V)$ are the co-standard modules~\cite{Putcharep3}.

If $A$ is an algebra and $I$ an ideal, then $\module{A/I}$ is a full
subcategory of $\module{A}$.  The inclusion has left and right
adjoints given by $M\mapsto M\otimes_{A} A/I$ and $M\mapsto
\Hom_{A}(A/I,M)$ respectively.  If $MI$ denotes the submodule of $M$
generated by all elements $ma$ with $m\in M$ and $a\in I$, then
$M\otimes_{A} A/I \cong M/MI$.  On the other hand
$\Hom_{A}(A/I,M)$ consists of those elements of $M$ annihilated by
$I$.  It is easily verified that the left adjoint preserves projectivity,
the right adjoint preserves injectivity and both are the identity on
$\module{A/I}$~\cite{auslanderidem,Benson}. If $I$ is an ideal of a semigroup $S$ and $M$ is a $kS$-module, we write $MI$ instead of $MkI$ to ease notation.

To compute the quiver of $kS$, we would like to work with the induced
and coinduced modules rather than with projective covers, which we
do not know how to compute in general.  We can do this thanks to the following well-known lemma concerning idempotent ideals~\cite{auslanderidem}, which we prove for completeness.

\begin{Lemma}\label{extstaysthesame}
Let $I$ be an idempotent ideal of an algebra $A$ and suppose that \mbox{$M,N\in \module{A/I}$}.  Then \[\Ext^1_A(M,N)\cong \Ext^1_{A/I}(M,N).\]
\end{Lemma}
\begin{proof}
Let $P$ be a projective $A$-module.  Then the exact sequence of $A$-modules \[0\longrightarrow PI\longrightarrow P\longrightarrow P/PI\longrightarrow 0\] gives rise to the exact sequence
\[\Hom_A(PI,N) \longrightarrow \Ext^1_A(P/PI,N)\longrightarrow 0.\]  But $\Hom_A(PI,N)\cong \Hom_{A/I}(PI/PI^2,N) =0$ as $I^2=I$.  It follows that $\Ext^1_A(P/PI,N)=0$.

Now we can find a short exact sequence of $A/I$-modules \[0\longrightarrow K\longrightarrow (A/I)^m\longrightarrow M\longrightarrow 0.\]  By the above, $\Ext^1_A((A/I)^m,N)=0=\Ext^1_{A/I}((A/I)^m,N)$. Using long exact $\Ext$-sequences,  both $\Ext^1_A(M,N)$ and $\Ext^1_{A/I}(M,N)$ can be identified as the cokernel of the map $\Hom_A((A/I)^m,N)\to \Hom_A(K,N)$.  This proves the lemma.
\end{proof}

The following lemma is from~\cite{auslanderidem}.
\begin{Lemma}\label{strongidempotent1}
Let $A$ be an algebra and $I$ an idempotent ideal that is projective
as a right $A$-module.  Then, for any $A/I$-modules $M,N$, there is an
isomorphism $\Ext^n_{A}(M,N)\cong \Ext^n_{A/I}(M,N)$ all $n\geq 0$.
\end{Lemma}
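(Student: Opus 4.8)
The plan is to prove the statement by induction on $n$, reducing the higher $\Ext$ groups to $\Ext^1$ and $\Hom$ via dimension shifting, exactly in the spirit of the proof of Lemma~\ref{extstaysthesame}. The base cases are immediate: for $n=0$ any $A$-linear map between $A/I$-modules automatically factors through the $A/I$-action (since such modules are annihilated by $I$), so $\Hom_A(M,N)=\Hom_{A/I}(M,N)$; and the case $n=1$ is precisely Lemma~\ref{extstaysthesame}.

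The crucial intermediate step, and the place where the projectivity hypothesis on $I$ enters, is to show that $\Ext^n_A(A/I,N)=0$ for all $n\geq 1$ whenever $N$ is an $A/I$-module. Since $A$ is free and $I$ is projective as a right $A$-module, the short exact sequence $0\to I\to A\to A/I\to 0$ is a projective resolution of $A/I$ of length one; this immediately gives $\Ext^n_A(A/I,N)=0$ for $n\geq 2$. For $n=1$ we read off $\Ext^1_A(A/I,N)$ as the cokernel of $\Hom_A(A,N)\to\Hom_A(I,N)$, but $\Hom_A(I,N)=0$: any $f\colon I\to N$ satisfies $f(xy)=f(x)y\in NI=0$ for $x,y\in I$, and $I=I^2$ then forces $f=0$. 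Hence $\Ext^n_A(A/I,N)=0$ for all $n\geq 1$, and passing to finite direct sums yields $\Ext^n_A((A/I)^m,N)=0$ for all $n\geq 1$.

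With this vanishing in hand the induction runs smoothly. Given an $A/I$-module $M$, choose a short exact sequence of $A/I$-modules $0\to K\to (A/I)^m\to M\to 0$, which is also exact over $A$. Because $(A/I)^m$ is projective over $A/I$ and $\Ext$-acyclic over $A$ by the previous paragraph, the long exact $\Ext$-sequences over $A$ and over $A/I$ both furnish dimension-shifting isomorphisms $\Ext^n_A(M,N)\cong\Ext^{n-1}_A(K,N)$ and $\Ext^n_{A/I}(M,N)\cong\Ext^{n-1}_{A/I}(K,N)$ for $n\geq 2$. Since $K$ is again an $A/I$-module, the inductive hypothesis applied to the pair $K,N$ identifies the two right-hand sides, and naturality of the connecting maps in the two long exact sequences makes the resulting isomorphism compatible, completing the induction.

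I expect the main obstacle to be the vanishing step $\Ext^n_A((A/I)^m,N)=0$, where both hypotheses on $I$ are genuinely needed: projectivity of $I$ controls the degrees $n\geq 2$ by bounding the projective dimension of $A/I$ by one, while idempotency $I=I^2$ handles the borderline degree $n=1$ through the identity $\Hom_A(I,N)=0$. Once this is secured, the remainder is the standard dimension-shift argument already rehearsed in the proof of Lemma~\ref{extstaysthesame}.
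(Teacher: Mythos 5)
Your proof is correct and follows essentially the same route as the paper: establish $\Ext^n_A(A/I,N)=0$ for $n\geq 1$ using the length-one projective resolution $0\to I\to A\to A/I\to 0$, then induct via dimension shifting along a presentation $0\to K\to (A/I)^m\to M\to 0$, with Lemma~\ref{extstaysthesame} supplying the base case $n=1$. The only cosmetic difference is that for the vanishing of $\Ext^1_A(A/I,N)$ you compute $\Hom_A(I,N)=0$ directly from $I^2=I$, whereas the paper cites Lemma~\ref{extstaysthesame}; your computation is just the special case $P=A$ of the argument inside that lemma's proof.
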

\begin{proof}
First using the previous lemma, the exact sequence \[0\longrightarrow I\longrightarrow A\longrightarrow A/I\longrightarrow
0\] and the projectivity of $I$ and $A$, we obtain $\Ext^n_A(A/I,N)=0$
for $n\geq 1$ from the long exact $\Ext$-sequence.  Indeed, the case $n=1$ follows from Lemma~\ref{extstaysthesame}.  In general, we have an exact sequence \[0=\Ext^n_A(I,N)\longrightarrow \Ext^{n+1}_A(A/I,N)\longrightarrow \Ext_A^{n+1}(A,N)=0.\]

The lemma is now proved by induction on $n$, the case $n=0$ being trivial and the
case $n=1$ following from Lemma~\ref{extstaysthesame}.  Suppose the
lemma holds for $n$. Again choose a short exact sequence of
$A/I$-modules \[0\longrightarrow K\longrightarrow
(A/I)^m\longrightarrow M\longrightarrow 0.\]  The long exact
$\Ext$-sequence and what we just proved then yield dimension shifts
$\Ext^n_A(K,N)\cong \Ext^{n+1}_A(M,N)$ and $\Ext^n_{A/I}(K,N)\cong
\Ext^{n+1}_{A/I}(M,N)$.  Application of the inductive hypothesis to $K$ completes the proof.
\end{proof}

Our next lemma is a variant on a result of~\cite{auslanderidem} where
filtrations are considered. If $J$ is a $\J$-class and $R$ is an $\R$-class of $S$ contained in $J$, we can make $kJ$ and $kR$ into $kS$-modules by identifying them with the isomorphic vector spaces $kJ+k\Jnotup{J}/k\Jnotup{J}$ and $kR+k\Jnotup{J}/k\Jnotup{J}$, respectively.

\begin{Lemma}\label{exttostrongidemideal}
Let $I$ be an ideal of the regular monoid $S$.  Then we have
$\Ext^n_{kS}(M,N)\cong \Ext^n_{kS/kI}(M,N)$ for any $kS/kI$-modules
$M,N$ and $n\geq 0$.
\end{Lemma}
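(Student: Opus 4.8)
The plan is to deduce the statement from Lemma~\ref{strongidempotent1} by removing the $\J$-classes of $I$ one at a time, from the bottom up, so that each removal strips off a single $0$-minimal $\J$-class whose span is an idempotent ideal that is projective as a right module. Concretely, assuming $I\neq S$ (otherwise $kS/kI=0$ and there is nothing to prove), I would order the $\J$-classes of $I$ as $J^{(1)},\ldots,J^{(m)}$ so that $J^{(a)}\leq_{\J}J^{(b)}$ implies $a\leq b$. Then $I_t=J^{(1)}\cup\cdots\cup J^{(t)}$ is an ideal of $S$ for each $t$, with $I_0=\emptyset$ and $I_m=I$, and each Rees quotient $S/I_t$ is a finite regular monoid with zero. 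Writing $A_t=kS/kI_t$, any $kS/kI$-module is an $A_t$-module for every $t$, and $A_{t-1}/(kI_t/kI_{t-1})=A_t$; so by transitivity it suffices to prove $\Ext^n_{A_{t-1}}(M,N)\cong \Ext^n_{A_t}(M,N)$ for each $t$ and all $A_t$-modules $M,N$.

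Fixing $t$, I set $T=S/I_{t-1}$ and let $J=J^{(t)}$ denote its image, which is a $0$-minimal $\J$-class of $T$ since every $\J$-class strictly below $J^{(t)}$ lies in $I_{t-1}$ by the choice of ordering. Let $\mathcal I=kI_t/kI_{t-1}$ be the corresponding ideal of $A_{t-1}$, namely the span of $J$. To apply Lemma~\ref{strongidempotent1} to $A_{t-1}$ and $\mathcal I$ I must verify that $\mathcal I$ is idempotent and projective as a right $A_{t-1}$-module. Idempotency is immediate from regularity: for $a\in J$ with $aba=a$ one has $ba\in J$ (else $a=a(ba)$ would drop strictly below $J$, i.e.\ become $0$ in $T$), so $a=a(ba)\in \mathcal I^2$, whence $\mathcal I^2=\mathcal I$.

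The crux is projectivity. I would decompose $J$ into its $\R$-classes: since $J$ is $0$-minimal, for $r$ in an $\R$-class $R$ and any $s\in T$ we have $rs\leq_{\R}r$, so either $rs=0$ or $rs\J r$, and in the latter case stability forces $rs\R r$, i.e.\ $rs\in R$. Hence each $kR$ is a right $A_{t-1}$-submodule and $\mathcal I=\bigoplus_{R}kR$. Each $R$ contains an idempotent $e$, and $eA_{t-1}=kR$: indeed $eT\cap J$ is the $\R$-class of $e$, namely $R$, while $eT\setminus J\subseteq\{x<_{\J}e\}=\{0\}$, so $eT=R\cup\{0\}$, and every $r\in R$ equals $es$ for some $s\in T$ because $r\R e$. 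As $e$ is an idempotent of $A_{t-1}$, the module $eA_{t-1}$ is projective, so each $kR$, and therefore $\mathcal I$, is projective.

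With both hypotheses checked, Lemma~\ref{strongidempotent1} yields the step isomorphism $\Ext^n_{A_{t-1}}(M,N)\cong\Ext^n_{A_t}(M,N)$, and composing these over $t=1,\ldots,m$ gives the desired $\Ext^n_{kS}(M,N)\cong\Ext^n_{kS/kI}(M,N)$. The one genuinely non-formal step is the projectivity of $\mathcal I$: everything hinges on identifying the $\R$-class summands $kR$ with the idempotent-generated projectives $eA_{t-1}$, which in turn rests on $J$ being $0$-minimal once the lower $\J$-classes have been collapsed. The rest is bookkeeping with Green's relations and stability. I expect the only delicate point to be making the filtration precise enough that at each stage the removed layer really is $0$-minimal, so that the clean description $eT=R\cup\{0\}$ is available.
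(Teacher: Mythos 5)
Your proof is correct and takes essentially the same route as the paper's: a filtration that strips off one $\J$-class of $I$ at a time (maximal at each stage, hence $0$-minimal in the corresponding quotient), a check that each layer $kJ$ is an idempotent ideal (by regularity) which is projective as a right module via its decomposition into $\R$-classes, and an application of Lemma~\ref{strongidempotent1} at each step. The only cosmetic difference is in the projectivity step: the paper fixes a single idempotent $e\in J$, identifies $eA=kR$, and invokes Green's Lemma to see $kJ$ as a direct sum of copies of $kR$, whereas you use the fact that every $\R$-class of a regular $\J$-class contains an idempotent to identify each $\R$-class summand directly as an idempotent-generated projective.
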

\begin{proof}
We proceed by induction on the number of $\J$-classes of $I$.  For
convenience, we allow in this proof $I=\emptyset$, in which case the
conclusion is vacuous.  Suppose it is true when the ideal has $m$
$\J$-classes.  Let $I$ be an ideal with $m+1$ $\J$-classes and let $J$
be a maximal $\J$-class of $I$.  Then $I'=I\setminus J$ is an ideal of $S$
with $m$ $\J$-classes. Let $M,N$ be $kS/kI$-modules.  Then they are
also $kS/kI'$-modules and so by induction $\Ext^n_{kS}(M,N)\cong
\Ext^n_{kS/kI'}(M,N)$ for all $n$.  Set $A=kS/kI'$ and $C=kI/kI'$.
Then $A/C\cong kS/kI$ and $C$ is an idempotent ideal of $A$.
Moreover, $C\cong kJ$ and if $e$ is an idempotent of $J$, then $eA=ekJ$ is projective.  Observe that $ekJ=kR$,
where $R$ is the $\R$-class of $e$. Now Green's Lemma~\cite{Green,CP,qtheor,Arbib}  implies $kJ$
is isomorphic to a direct sum of copies of $kR$ as an $A$-module, one for each $\Lrel$-class of $J$, and
so $C\cong kJ$ is projective.  Lemma~\ref{strongidempotent1} then yields
$\Ext^n_{kS/kI'}(M,N)\cong \Ext^n_{kS/kI}(M,N)$, for all $n$,
completing the proof.
\end{proof}

In the terminology of~\cite{auslanderidem}, this result says that $kI$
is a strong idempotent ideal of $kS$.  Let us state a variant of the well-known
Eckmann-Shapiro lemma from homological algebra~\cite{Benson,Hilton}. We sketch the idea of the proof.

\begin{Lemma}[Eckmann-Shapiro]\label{Ecksha}
Let $A$ be an algebra, $e\in A$ an idempotent and $B$ a subalgebra of
$eAe$.  Let $M$ be a $B$-module and $N$ an $A$-module.  If $eA$ is a
flat left $B$-module, then $\Ext^n_A(M\otimes_B eA,N)\cong \Ext^n_B(M,Ne)$.
If $Ae$ is a projective right $B$-module,
$\Ext^n_A(N,\Hom_B(Ae,M))\cong \Ext^n_B(Ne,M)$.  Moreover, these isomorphisms are natural.
\end{Lemma}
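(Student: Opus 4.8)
The plan is to deduce both isomorphisms from a pair of natural adjunction isomorphisms at the level of $\Hom$, and then transport them to higher $\Ext$ by means of a suitable resolution. The two underlying adjunctions are standard tensor-hom adjunctions. For the first statement, $eA$ is a $(B,A)$-bimodule and one has the natural isomorphism $\Hom_A(M\otimes_B eA,N)\cong \Hom_B(M,\Hom_A(eA,N))\cong \Hom_B(M,Ne)$, where the identification $\Hom_A(eA,N)\cong Ne$ sends $f$ to $f(e)$ and is a map of right $B$-modules because $B\subseteq eAe$. For the second statement, $Ae$ is an $(A,B)$-bimodule, $Ne\cong N\otimes_A Ae$ as right $B$-modules, and the hom-tensor adjunction gives the natural isomorphism $\Hom_A(N,\Hom_B(Ae,M))\cong \Hom_B(N\otimes_A Ae,M)\cong \Hom_B(Ne,M)$.

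First I would prove the first isomorphism. Choose a projective resolution $P_\bullet\to M$ in $\module B$. The functor ${-}\otimes_B eA\colon \module B\to \module A$ is exact because $eA$ is flat as a left $B$-module, which is precisely the hypothesis, and it carries projectives to projectives: it is additive and $B\otimes_B eA\cong eA$ is a direct summand of $A$ as a right $A$-module, since $A=eA\oplus (1-e)A$. Hence $P_\bullet\otimes_B eA\to M\otimes_B eA$ is a projective resolution in $\module A$. Computing $\Ext^n_A(M\otimes_B eA,N)$ as the cohomology of $\Hom_A(P_\bullet\otimes_B eA,N)$ and applying the natural adjunction isomorphism term by term identifies this cochain complex with $\Hom_B(P_\bullet,Ne)$, whose cohomology is $\Ext^n_B(M,Ne)$.

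The second isomorphism is entirely dual, but resolving the first variable instead. Choose a projective resolution $Q_\bullet\to N$ in $\module A$. The restriction functor ${-}e\cong {-}\otimes_A Ae\colon \module A\to \module B$ is exact, since $Ae$ is a direct summand of $A$ as a left $A$-module, and it sends projectives to projectives precisely because $Ae$ is projective as a right $B$-module, which is the hypothesis here. Thus $Q_\bullet e\to Ne$ is a projective resolution in $\module B$, and applying the natural adjunction $\Hom_A(Q_\bullet,\Hom_B(Ae,M))\cong \Hom_B(Q_\bullet e,M)$ term by term yields $\Ext^n_A(N,\Hom_B(Ae,M))\cong \Ext^n_B(Ne,M)$.

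The routine verifications are the two adjunction isomorphisms together with their naturality; the only point requiring genuine care is matching the hypotheses to what the resolution argument needs. In each case the transporting functor must be both exact and projectivity-preserving: exactness comes from a flatness condition (automatic for the direct-summand module $Ae$, and hypothesized for $eA$), while preservation of projectives comes from the relevant module being a direct summand of, or projective over, the ambient algebra. Naturality of the $\Hom$-adjunction in the resolved variable is exactly what guarantees that the termwise isomorphism is a morphism of cochain complexes and hence descends to cohomology, and naturality of the resulting $\Ext$-isomorphisms in $M$ and $N$ follows in the usual way.
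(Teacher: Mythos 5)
Your proof is correct, and for the first isomorphism it is essentially the paper's own argument: transport a projective resolution of $M$ along the exact functor $(-)\otimes_B eA$ and apply the adjunction termwise. (You justify preservation of projectives by noting that $B\otimes_B eA\cong eA$ is a direct summand of $A$, where the paper invokes the fact that a left adjoint of an exact functor preserves projectives; both are standard.) Where you genuinely diverge is the second isomorphism. The paper's ``proved similarly'' is most naturally read as the dual argument: $\Hom_B(Ae,-)$ is exact (by projectivity of $Ae$ over $B$) and preserves injectives (being right adjoint of the exact restriction $V\mapsto Ve$), so one coinduces an injective resolution of $M$ up to $\module{A}$ and applies $\Hom_A(N,-)$. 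You instead resolve the other variable: take a projective resolution of $N$ over $A$ and restrict it down to $\module{B}$, using that restriction preserves projectives precisely because $Ae$ is $B$-projective, then apply $\Hom_B(-,M)$ with the adjunction. Both routes are valid; yours has the small advantage of staying entirely within projective resolutions (no appeal to injective resolutions at all), while the paper's dual version makes the symmetry between the two statements --- and the role of coinduction as an injectivity-preserving right adjoint, which the paper exploits elsewhere --- more transparent.
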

\begin{proof}
Since the functor $(-)\otimes_B eA$ is exact (by flatness) and preserves projectives (being left adjoint of the exact functor $V\mapsto Ve$), it takes a projective resolution of a $B$-module $M$ to a projective resolution of the $A$-module $M\otimes_B eA$.  Applying the functor $\Hom_A(-,N)$ to this projective resolution of $M\otimes_B eA$ and using the adjunction gives an isomorphism of the chain complexes computing the $\Ext$-vector spaces $\Ext^n_A(M\otimes_B eA,N)$ and  $\Ext^n_B(M,Ne)$.  The second isomorphism is proved similarly.
\end{proof}

As a consequence we obtain the following natural isomorphisms stemming from induction
and coinduction.

\begin{Lemma}\label{shapiro}
Let $M$ be a $kS/k\Jbelow{J_i}$-module and $V$ a $kG_i$-module, then
\begin{align*}
\Ext^n_{kS}(\Ind_i(V),M) \cong \Ext^n_{kG_i}(V,Me_i)\\
\Ext^n_{kS}(M,\Coind_i(V))\cong \Ext^n_{kG_i}(Me_i,V)
\end{align*}
for all $n\geq 0$.  Moreover, the isomorphisms are natural.
Consequently, the global dimension of $kG_i$ is bounded by the global dimension $kS$.
\end{Lemma}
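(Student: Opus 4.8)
The plan is to factor each isomorphism through the intermediate algebra $A=kS/k\Jbelow{J_i}$, applying the Eckmann-Shapiro lemma over $A$ and then transferring up to $kS$ by the strong idempotent ideal result. Write $e=e_i$ and identify $e$ with its image in $A$. By stability we have the subalgebra $B=eAe\cong kG_i$, and under this identification $eA=ekS/k\Jbelow{J_i}$ is a free, hence flat, \emph{left} $B$-module, while $Ae=(kS/k\Jbelow{J_i})e$ is a free, hence projective, \emph{right} $B$-module, as recorded above. Moreover the two functors under consideration become exactly $\Ind_i(V)=V\otimes_B eA$ and $\Coind_i(V)=\Hom_B(Ae,V)$.

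With these identifications in hand, I would apply Lemma~\ref{Ecksha} to the algebra $A$, the idempotent $e$, and the subalgebra $B$. For the first formula, taking the $B$-module to be $V$ and the $A$-module to be $M$, flatness of $eA$ over $B$ yields the natural isomorphism $\Ext^n_A(\Ind_i(V),M)\cong \Ext^n_B(V,Me)$. For the second, taking the $A$-module to be $M$ and the $B$-module to be $V$, projectivity of $Ae$ over $B$ yields $\Ext^n_A(M,\Coind_i(V))\cong \Ext^n_B(Me,V)$. In both cases $Me=Me_i$ and $B=kG_i$, so these are precisely the desired isomorphisms, but computed over $A$ rather than over $kS$.

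It remains to lift from $A=kS/k\Jbelow{J_i}$ to $kS$. Since $\Jbelow{J_i}$ is an ideal of $S$ and the modules $\Ind_i(V)$, $M$, and $\Coind_i(V)$ are all $kS/k\Jbelow{J_i}$-modules, Lemma~\ref{exttostrongidemideal} gives natural isomorphisms $\Ext^n_{kS}(\Ind_i(V),M)\cong \Ext^n_A(\Ind_i(V),M)$ and $\Ext^n_{kS}(M,\Coind_i(V))\cong \Ext^n_A(M,\Coind_i(V))$ for all $n$. Composing these with the Eckmann-Shapiro isomorphisms establishes both statements, with naturality inherited from the naturality asserted in Lemmas~\ref{Ecksha} and~\ref{exttostrongidemideal}. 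I do not expect a genuine obstacle here: the content is the bookkeeping of matching the hypotheses of Eckmann-Shapiro to the already-established freeness of $eA$ and $Ae$ over $kG_i$, together with the fact that $k\Jbelow{J_i}$ is a strong idempotent ideal.

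Finally, for the consequence on global dimension, I would specialize the first isomorphism by taking $M=\Coind_i(W)$ for an arbitrary $kG_i$-module $W$. Since $\Coind_i(W)e_i\cong W$ as noted above, this gives $\Ext^n_{kS}(\Ind_i(V),\Coind_i(W))\cong \Ext^n_{kG_i}(V,W)$. Hence whenever $\Ext^n_{kG_i}(V,W)\neq 0$, the index $n$ cannot exceed the global dimension of $kS$; as $V$, $W$, and $n$ are arbitrary, the global dimension of $kG_i$ is bounded by that of $kS$.
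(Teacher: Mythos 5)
Your proposal is correct and follows essentially the same route as the paper: reduce to the quotient algebra $kS/k\Jbelow{J_i}$ via Lemma~\ref{exttostrongidemideal} and then apply the Eckmann-Shapiro Lemma~\ref{Ecksha} using the freeness of $e_ikS/k\Jbelow{J_i}$ and $(kS/k\Jbelow{J_i})e_i$ over $kG_i$. The only cosmetic differences are that you treat the second isomorphism explicitly where the paper invokes duality, and for the global dimension bound you take $M=\Coind_i(W)$ where the paper uses $\Ind_i(W)$ -- both choices work since $\Ind_i(W)e_i\cong W\cong\Coind_i(W)e_i$.
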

\begin{proof}
We handle just the first isomorphism as the second is dual.  By
Lemma~\ref{exttostrongidemideal}, $\Ext^n_{kS}(\Ind_i(V),M)\cong
\Ext^n_{kS/k\Jbelow{J_i}}(\Ind_i(V),M)$. Since $e_ikS/k\Jbelow{J_i}$
is a free left $kG_i$-module, the Eckmann-Shapiro Lemma
implies \[\Ext^n_{kS/k\Jbelow{J_i}}(\Ind_i(V),M)\cong
\Ext^n_{kG_i}(V,Me_i).\] The final statement is clear since, for any $kG_i$-module $W$,
\[\Ext^n_{kG_i}(V,W)= \Ext^n_{kG_i}(V,\Ind_i(W)e_i)\cong
\Ext^n_{kS}(\Ind_i(V),\Ind_i(W)).\]  This completes the proof.
\end{proof}

Since a group algebra is well known to have finite global dimension if
and only if the characteristic of the field does not divide the order
of the group, it follows that if $kS$ has finite global dimension then
the characteristic of the field does not divide the order of any
maximal subgroup of $S$.  In fact, Nico~\cite{Nico1,Nico2} showed that
$kS$ has finite global dimension if and only if the characteristic of
$k$ does not divide the order of any maximal subgroup.  More precisely he
proved the following theorem.

\begin{Thm}[Nico]
Let $S$ be a regular monoid and suppose that the characteristic of $k$
does not divide the order of any maximal subgroup of $S$.  If $J$ is a
$\J$-class, define \[\sigma(J) = \begin{cases} 0 & kJ^0\ \text{has an
    identity} \\ 1 & kJ^0\ \text{has a one-sided identity only}\\ 2
  &\text{else.}\end{cases}\]  If $\mathscr C$ is a chain of
$\J$-classes, define $\tau(\mathscr C) = \sum_{J\in \mathscr
  C}\sigma(J)$.  Then the global dimension is bounded by the maximum
of $\tau(\mathscr C)$ over all chains of $\J$-classes of $S$.  In particular,
it is bounded by $2(m-1)$ where $m$ is the length of the longest chain
of non-zero $\J$-classes of $S$.
\end{Thm}

Here $J^0$ is the semigroup with underlying set $J\cup \{0\}$ with $0$ a multiplication given, for $x,y\in J$, by \[x\cdot y = \begin{cases} xy & xy\in J\\ 0 & \text{else.}\end{cases}\]
The final statement of the theorem can also be obtained from the general theory of quasi-hereditary algebras~\cite{quasihered,drozd,Putcharep3}.

The following theorem will allow us to describe to some extent the
quiver of a regular monoid.  One could derive at least a part of it
from the theory of stratified algebras~\cite{CPS2}.  For the
characteristic zero case, Putcha~\cite{Putcharep3} deduced some of these
results from the theory of quasi-hereditary algebras.

\begin{Thm}\label{quiverthm}
Let $S$ be a regular monoid and $k$ a field.
Suppose $\til{U}$ and $\til{V}$ are simple $kS$-modules with apexes $J_i,J_{\ell}$, respectively. Let $N=\mathrm{rad}(\Ind_i(U))$ and $N'=\Coind_i(U)/\til{U}$.   Then:
\begin{enumerate}
\item If $J_i,J_{\ell}$ are $\leq_{\J}$-incomparable, then $\Ext^1_{kS}(\til{U},\til{V})=0$;
\item If $J_i<_{\J} J_{\ell}$, then
\begin{align*}\Ext^1_{kS}(\til{U},\til{V})&\cong \Hom_{kS/k\Jbelow{J_{\ell}}}(N/N\Jbelow{J_{\ell}},\til{V})\\ &\cong \Hom_{kG_{\ell}}([(N/N\Jbelow{J_{\ell}})/\mathrm{rad}(N/N\Jbelow{J_{\ell}})]e_{\ell},V);
\end{align*}
and
\begin{align*}\Ext^1_{kS}(\til{V},\til{U})&\cong \Hom_{kS/k\Jbelow{J_{\ell}}}(\til{V},\Hom_{kS}(kS/k\Jbelow{J_{\ell}},N'))\\ &\cong \Hom_{kG_{\ell}}(V,\mathrm{Soc}(\Hom_{kS}(kS/k\Jbelow{J_{\ell}},N'))e_{\ell});
\end{align*}
\item If $J_i=J_{\ell}$, then $\Ext^1_{kS}(\til{U},\til{V})$ embeds in
  $\Ext^1_{kG_i}(U,V)$, and in particular is $0$ if the characteristic
  of $k$ does not divide $|G_i|$.
\end{enumerate}
\end{Thm}
\begin{proof}
Suppose that $J_i\not>_{\J} J_{\ell}$.   Then the  long exact $\Ext$-sequence derived from the short exact sequence $0\to N\to \Ind_i(U)\to \til{U}\to 0$ yields
\begin{align*}
0&\longrightarrow \Hom_{kS}(\til{U},\til{V})\longrightarrow \Hom_{kS}(\Ind_i(U),\til{V})\longrightarrow \Hom_{kS}(N,\til{V}) \\ & \longrightarrow \Ext^1_{kS}(\til{U},\til{V})\longrightarrow \Ext^1_{kS}(\Ind_i(U),\til{V})
\end{align*}
is exact.
The first non-zero map is an isomorphism since $\til{V}$ is simple and $\til{U}$ is the top of $\Ind_i(U)$. Now $\Hom_{kS}(N,\til{V})=\Hom_{kS}(N/N\Jbelow{J_{\ell}},\til{V})$.  But then
\begin{align*}
\Hom_{kS}(N/N\Jbelow{J_{\ell}},\til{V}) &=
\Hom_{kS}((N/N\Jbelow{J_{\ell}})/\mathrm{rad}(N/N\Jbelow{J_{\ell}}),\til{V})\\
&=\Hom_{kS}((N/N\Jbelow{J_{\ell}})/\mathrm{rad}(N/N\Jbelow{J_{\ell}}),\Coind_\ell(V))
\\
&= \Hom_{kG_{\ell}}([(N/N\Jbelow{J_{\ell}})/\mathrm{rad}(N/N\Jbelow{J_{\ell}})]e_{\ell},V).
\end{align*}
Since $J_i\not>_{\J} J_{\ell}$, $\til V$ is a $kS/k\Jbelow{J_i}$-module.  Lemma~\ref{shapiro} then provides  the isomorphism $\Ext^1_{kS}(\Ind_i(U),\til{V}) \cong \Ext^1_{kG_i}(U,\til{V}e_i)$.

Suppose first that $J_i\neq J_{\ell}$. Then $\til{V}e_i=0$ (as $J_i\ngeq_{\J} J_{\ell}$) and so
\begin{equation*}
0\longrightarrow \Hom_{kS}(N/N\Jbelow{J_{\ell}},\til{V})\longrightarrow \Ext^1_{kS}(\til{U},\til{V})\longrightarrow 0
\end{equation*}
is exact.
Now if $J_i$ and $J_{\ell}$ are incomparable, then $Ne_{\ell}=0$ since $N$ is a $kS/k\Jnotup{J_i}$-module.  This proves (1) and the first statement of (2).  The second statement of (2) is dual.

Assume now $J_i=J_{\ell}$.  Then recalling that $N$ is annihilated by $e_i$, it follows that we have an exact sequence \[0\rightarrow \Ext^1_{kS}(\til{U},\til{V})\longrightarrow \Ext^1_{kS}(\Ind_i(U),\til{V})\cong\Ext^1_{kG_i}(U,\til{V}e_i)=\Ext^1_{kG_i}(U,V)\]
This completes the proof.
\end{proof}

In particular, if the characteristic of $k$ does not divide the order
of any maximal subgroup of $S$, then we have the following result, originally derived by Putcha
when the characteristic of $k$ is zero from the theory of quasi-hereditary algebras~\cite{Putcharep3}.
\begin{Cor}\label{mustcompare}
Let $S$ be a regular monoid and $k$ a field whose characteristic does not divide the order of any maximal subgroup of $S$.  Suppose $\til U$ and $\til V$ are simple $kS$-modules with apexes $J_i$ and $J_{\ell}$, respectively.  Then $\Ext_{kS}(\til U,\til V)\neq 0$ implies that either $J_i<_{\J}J_{\ell}$ or $J_{\ell}<_{\J} J_i$.
\end{Cor}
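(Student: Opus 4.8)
The plan is to read this off directly from Theorem~\ref{quiverthm} by contraposition, since the corollary is precisely the assertion that $\Ext^1_{kS}(\til U,\til V)$ vanishes whenever the two apexes fail to be strictly $\J$-comparable. Accordingly, I would assume that $J_i$ and $J_{\ell}$ are \emph{not} strictly $\J$-comparable and deduce $\Ext^1_{kS}(\til U,\til V)=0$; the contrapositive is then exactly the stated implication. Since $\leq_{\J}$ is a genuine partial order on $S/{\J}$, negating strict comparability leaves only two possibilities for the pair $(J_i,J_{\ell})$: either they are $\leq_{\J}$-incomparable, or $J_i=J_{\ell}$.

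In the first case I would simply invoke part~(1) of Theorem~\ref{quiverthm}, which gives $\Ext^1_{kS}(\til U,\til V)=0$ outright. In the second case $J_i=J_{\ell}$, so part~(3) of Theorem~\ref{quiverthm} supplies an embedding $\Ext^1_{kS}(\til U,\til V)\hookrightarrow \Ext^1_{kG_i}(U,V)$, where $U$ and $V$ are the simple $kG_i$-modules with $\til U$ and $\til V$ their respective inflations. Here the hypothesis on the characteristic does the work: since $\mathrm{char}\,k$ does not divide $|G_i|$, Maschke's theorem makes $kG_i$ semisimple, so $kG_i$ has global dimension $0$ and hence $\Ext^1_{kG_i}(U,V)=0$ for all $kG_i$-modules. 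The embedding then forces $\Ext^1_{kS}(\til U,\til V)=0$ as well.

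Assembling the two cases shows that $\Ext^1_{kS}(\til U,\til V)=0$ whenever $J_i$ and $J_{\ell}$ are not strictly comparable, which is the contrapositive of the desired statement. There is no substantive obstacle here: the corollary is a bookkeeping consequence of the trichotomy already built into Theorem~\ref{quiverthm}, and the only points needing care are verifying that the three cases of the theorem exhaust the possibilities for $(J_i,J_{\ell})$ and that the two non-strictly-comparable cases are exactly the ones yielding vanishing $\Ext^1$. The semisimplicity of $kG_i$ in the equal-apex case is the sole place where the hypothesis on the characteristic is invoked.
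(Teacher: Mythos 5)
Your proof is correct and is exactly the argument the paper intends: the corollary appears in the paper with no separate proof, being stated as an immediate consequence ("In particular...") of Theorem~\ref{quiverthm}, namely part~(1) for incomparable apexes and part~(3) combined with semisimplicity of $kG_i$ (Maschke) for equal apexes. Your case analysis and contrapositive bookkeeping fill in precisely those steps, so there is nothing to add.
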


In the case of characteristic $p$, if $\til U,\til V$ are simple
modules with the same apex $J_i$, it is not necessarily true that
$\mathrm{dim}\ \Ext^1_{kS}(\til U,\til V)= \mathrm{dim}\
\Ext^1_{kG_i}(U,V)$.  For instance, if $C=\{e,g\}$ is a cyclic group
of order $2$ and $k$ is an algebraically closed field of characteristic
$2$, then it is easy to verify that the trivial module $k$ is the
unique simple $kC$-module and $\mathrm{dim}\ \Ext^1_{kC}(k,k)=1$.  On the other hand, let $S$ be the Rees matrix semigroup \[\MM\left(C,2,2,\begin{pmatrix} e & e\\ e& g\end{pmatrix}\right)\] with an adjoined identity. Direct computation shows  $\mathrm{dim}\ \Ext^1_{kS}(\til k,\til k)=0$.

\section{Projective indecomposables and directedness}
A quasi-hereditary algebra is said to be \emph{directed} if all its standard modules are projective.  This depends on the ordering of the simple modules in general.  With this as motivation,  we shall define a regular semigroup $S$ to be \emph{directed} with respect to a field $k$ (or  say that $kS$ is \emph{directed}), if the characteristic of $k$ does not divide the order of any maximal subgroup of $S$ and each induced module $\Ind_i(V)$, with $V$ a simple $kG_i$-module, is projective (i.e., $kS$ is directed with respect to its canonical quasi-hereditary structure).  In this case, it follows that the $\Ind_i(V)$ are the projective indecomposables of $kS$ since $\Ind_i$ preserves indecomposability and $\Ind_i(V)$ has simple top $\til V$.   Our aim is to show that $kS$ is directed if and only if the sandwich matrix~\cite{CP,Arbib,qtheor} of each $\J$-class $J_i$ is left invertible over $kG_i$.
To do this, we prove that $S$ is directed with respect to $k$ if and only if $\til V=\Coind_i(V)$ for all $i$ and all $V$ a simple $kG_i$-module.  This is a standard fact in the theory of quasi-hereditary algebras, but we give a proof for completeness.\footnote{We are grateful to Vlastimil Dlab for pointing out to us that this equivalence is known.}

\begin{Prop}\label{directedcrit}
Suppose that $S$ is a regular monoid and the characteristic of $k$ does not divide the order of any maximal subgroup of $S$.  Then $S$ is directed with respect to $k$ if and only if all the coinduced modules $\Coind_i(V)$ with $V$ a simple $kG_i$-module are simple $kS$-modules (for all $i$).
\end{Prop}
\begin{proof}
Suppose first that $\til V=\Coind_i(V)$ for all simple $kG_i$-modules $V$ and all $i$.  A standard homological argument shows that a module $M$ over a finite dimensional algebra $A$ is projective if and only if $\Ext^1_A(M,W)=0$ for all simple $A$-modules $W$.

So suppose that $W$ is a simple $kS$-module and $V$ is a simple $kG_i$-module.  First assume that the apex $J_\ell$ of $W$ is not strictly $\J$-below $J_i$.  Then $W$ is a $kS/kJ_i^{\downarrow}$-module and so Lemma~\ref{shapiro} yields \[\Ext^1_{kS}(\Ind_i(V),W) = \Ext^1_{kG_i}(V,We_i) =0,\] where the latter equality follows since $kG_i$ is semisimple and hence $V$ is projective.

Thus we may assume that $J_{\ell}<_{\J} J_i$.  By hypothesis, $W=\Coind_\ell(We_{\ell})$.  Since $\Ind_i(V)$ is a $kS/kJ_\ell^{\downarrow}$-module an application of Lemma~\ref{shapiro} implies
\begin{align*}
\Ext_{kS}^1(\Ind_i(V),W) &= \Ext^1_{kS}(\Ind_i(V),\Coind_{\ell}(We_{\ell}))\\ & \cong \Ext^1_{kG_{\ell}}(\Ind_i(V)e_{\ell},We_{\ell})= 0
\end{align*}
where the last equality follows since $\Ind_i(V)e_{\ell}=0$ as $J_{\ell}<_{\J} J_i$ (or by semisimplicity of $kG_{\ell}$).  This proves that $\Ind_i(V)$ is projective.

Suppose conversely that for all $i$ and all simple $kG_i$-modules $V$, one has that $\Ind_i(V)$ is projective.  Then these are precisely the projective indecomposables of $kS$, as discussed above.  Let $U$ be a simple $kG_{\ell}$-module and  set $M$ equal to the cokernel of the inclusion $\til U\to \Coind_\ell(U)$.  If $M=0$, then we are done.  So assume $M\neq 0$.  Since $\Coind_\ell (U)$ is a $kS/k\Jnotup{J_\ell}$-module and $\Coind_\ell(U)e_\ell kS = \til U$, it follows that the apex of any composition factor of $M$ is strictly $\J$-above $J_{\ell}$.  Consequently, the projective cover $P$ of $M$ is a direct sum of modules of the form $\Ind_i(V)$ with $J_i>_{\J} J_{\ell}$.  We shall obtain a contradiction by showing that $\Hom_{kS}(\Ind_i(V),M)=0$ for $J_i>_{\J} J_{\ell}$.  Indeed, by projectivity of $\Ind_i(V)$ we have an exact sequence \[\Hom_{kS}(\Ind_i(V),\Coind_\ell(U))\longrightarrow \Hom_{kS}(\Ind_i(V),M)\longrightarrow 0.\]  Because $\Ind_i(V)$ is a $kS/k\Jbelow{J_{\ell}}$-module the leftmost term of the above sequence is isomorphic to $\Hom_{kG_{\ell}}(\Ind_i(V)e_{\ell},U)=0$ as $J_i>_{\J} J_{\ell}$ implies $\Ind_i(V)e_{\ell}=0$.
\end{proof}

To check the criterion in the above proposition, we need to make explicit how the simple modules of $kS$ sit in the coinduced modules.  What we are about to do is essentially give a coordinate-free argument for the results of Rhodes and Zalcstein~\cite{RhodesZalc}.

\begin{Prop}\label{computesimple}
Let $V$ be a simple $kG_i$-module.  Then there is a natural isomorphism $\Hom_{kS}(\Ind_i(V),\Coind_i(V))\cong \Hom_{kG_i}(V,V)\neq 0$.  Moreover, if $\p\in  \Hom_{kS}(\Ind_i(V),\Coind_i(V))$ is non-zero, then $\ker \p = \mathrm{rad}(\Ind_i(V))$ and $\mathop{\mathrm{Im}}\p = \til V = \mathrm{Soc}(\Coind_i(V))$.
\end{Prop}
\begin{proof}
First note that since $\Ind_i(V),\Coind_i(V)$ are $kS/kJ_i^{\downarrow}$-modules, the adjunction yields \[\Hom_{kG_i}(V,V)=\Hom_{kG_i}(\Ind_i(V)e_i,V)\cong  \Hom_{kS}(\Ind_i(V),\Coind_i(V)).\]  Suppose now that $\p\colon \Ind_i(V)\to \Coind_i(V)$ is a non-zero homomorphism.
Because $\Ind_i(V)e_ikS=\Ind_i(V)$ by construction, it follows that \[\p(\Ind_i(V)) = \p(\Ind_i(V))e_ikS\subseteq \Coind_i(V)e_ikS=\til V.\]  Since $\p\neq 0$, it follows by simplicity of $\til V$, that $\p(\Ind_i(V))=\til V$.  As $\Ind_i(V)$ has a unique maximal submodule, we conclude $\ker \p = \mathrm{rad}(\Ind_i(V))$.
\end{proof}

For a $kG_i$-module $V$, we set $V^* = \Hom_{kG_i}(V,kG_i)$; it is a left $kG_i$-module.  It is a standard fact that $\Hom_{kG_i}(V,W)$ is naturally isomorphic to $W\otimes_{kG_i}V^*$ for finitely generated $kG_i$-modules (when the characteristic of $k$ does not divide $|G_i|$)~\cite{Benson}.  The isomorphism sends $w\otimes \p$ to the map $v\mapsto w\p(v)$.  To prove the isomorphism, one first observes that it is trivial for $V=kG_i$ since both modules are isomorphic to $W$.  One then immediately obtains the isomorphism for all finitely generated projective modules and hence all finitely generated $kG_i$-modules since $kG_i$ is semisimple.

Let $L_i$ and $R_i$ denote the $\Lrel$-class and $\R$-class of $G_i$, respectively.  Observe that as vector spaces, we have $kL_i = (kS/k\Jbelow{J_i})e_i$ and $kR_i = e_ikS/k\Jbelow{J_i}$ by stability.  Moreover, the corresponding $kG_i$-$kS$-bimodule structure on $kR_i$ is induced by left multiplication by elements of $G_i$ and by the right Sch\"utzenberger representation of $S$ on $R_i$~\cite{CP,Arbib,qtheor}  (i.e., the action of $S$ on $R_i$ by partial functions obtained via restriction of the regular action).  To simplify notation, we will use $kR_i$ and $kL_i$ for the rest of this section.  Then
\begin{align*}
\Ind_i(V) &= V\otimes_{kG_i} kR_i \\
\Coind_i(V) &= \Hom_{kG_i}(kL_i,V) = V\otimes_{kG_i} kL_i^*.
\end{align*}

Multiplication in the semigroup induces a non-zero homomorphism
\begin{equation}\label{bilinearmap}
C_i\colon kR_i\otimes_{kS} kL_i\cong e_ikS/k\Jbelow{J_i}\otimes_{kS} (kS/k\Jbelow{J_i})e_i\to e_i(kS/k\Jbelow{J_i})e_i \cong kG_i
\end{equation}
which moreover is a map of $kG_i$-bimodules.   From the isomorphism \[\Hom_{kG_i}(kR_i\otimes_{kS} kL_i,kG_i)\cong \Hom_{kS}(kR_i,\Hom_{kG_i}(kL_i,kG_i))\] we obtain a corresponding non-zero $kS$-linear map $C_i\colon kR_i\to kL_i^*$ (abusing notation).  In fact, $C_i$ is a morphism of $kG_i$-$kS$-bimodules since \eqref{bilinearmap} respects both the left and right $kG_i$-module structures.

Let $T\subseteq R_i$ be a complete set of representatives of the $\Lrel$-classes of $J_i$ and $T'\subseteq L_i$ be a complete set of representatives of the $\R$-classes of $J_i$.  Then $G_i$ acts freely on the left of $R_i$ and $T$ is a transversal for the orbits and dually $T'$ is a transversal for the orbits of the free action of $G_i$ on the right of $L_i$, see~\cite[Appendix A]{qtheor}.  Thus $kR_i$ is a free left $kG_i$-module with basis $T$ and $kL_i$ is a free right $kG_i$-module with basis $T'$.  The dual basis to $T'$ is then a basis for the free left $kG_i$-module $kL_i^*$.  It is instructive to verify that the associated matrix representation of $S$ on $kR_i$ is the classical right Sch\"utzenberger representation by row monomial matrices and the representation of $S$ on $kL_i$ is the left Sch\"utzenberger representation by column monomial matrices~\cite{CP,qtheor,Arbib,RhodesZalc}.    Hence if $\ell_i=|T|$ and $r_i=|T'|$, then as $kG_i$-modules we have $kR_i\cong kG_i^{\ell_i}$ and $kL_i\cong kG_i^{r_i}$.  Thus $C_i$ is the bilinear form given by the $\ell_i\times r_i$-matrix (also denoted $C_i$) with
\begin{equation}\label{structurematrix}
(C_i)_{ba} = \begin{cases} \lambda_b\rho_a & \lambda_b\rho_a\in J\\ 0 & \text{otherwise}\end{cases}
\end{equation}
where $\lambda_b\in T$ represents the $\Lrel$-class $b$ and $\rho_a\in T'$ represents the $\R$-class $a$.  Note that $(C_i)_{ba}\in G_i\cup \{0\}$ by stability and $C_i$ is just the usual sandwich (or structure) matrix of the $\J$-class $J_i$ coming from the Green-Rees structure theory~\cite{CP,Arbib,qtheor}.  The reader may take \eqref{structurematrix} as the definition of the sandwich matrix if he/she so desires.   In particular, using $T$ as a basis for $kR_i$ and the dual basis to $T'$ as a basis for $kL_i^*$, we can view the sandwich matrix of $J_i$ as the matrix of the map $C_i\colon kR_i\to kL_i^*$.  The fact that the sandwich matrix gives a morphism of $kG_i$-$kS$-bimodules translates exactly into the so-called linked equations of~\cite{Arbib,qtheor}.

Putting together the above discussion, we obtain the following module-theoretic version of a result of Rhodes and Zalcstein~\cite{RhodesZalc}.
\begin{Thm}\label{RZsimple}
Let $V$ be a simple $kG_i$-module and suppose that $V$ is flat.  Then the simple $kS$-module $\til V$ is the image of the morphism \[V\otimes C_i\colon \Ind_i(V)=V\otimes_{kG_i} kR_i\to V\otimes_{kG_i}kL_i^*=\Coind_i(V)\] where $C_i$ is the sandwich matrix for $J_i$.  This holds in particular if the characteristic of $k$ does not divide $|G_i|$.
\end{Thm}
\begin{proof}
Since $C_i$ is not the zero matrix and $V$ is flat, $V\otimes C_i$ is a non-zero homomorphism.  Proposition~\ref{computesimple} then implies the desired conclusion.
\end{proof}

We now characterize when $kS$ is directed.  This generalizes the result of Munn and Ponizovsky characterizing semisimplicity of $kS$ in terms of invertibility of the structure matrices since a quasi-hereditary algebra is semisimple if and only if both it and its opposite algebra are directed with respect to a fixed quasi-hereditary structure.

\begin{Thm}\label{isdirected}
Let $S$ be a regular monoid and $k$ a field such that the characteristic of $k$ does not divide the order of any maximal subgroup of $S$.  Then $kS$ is directed if and only if the sandwich matrix of each $\J$-class of $S$ is left invertible.  In this case, the global dimension of $kS$ is bounded by $m-1$ where $m$ is the length of the longest chain of non-zero $\J$-classes of $S$.
\end{Thm}
\begin{proof}
The second statement is immediate from Nico's Theorem.  For the first, we use Proposition~\ref{directedcrit}.  Suppose first that all the sandwich matrices are left invertible.  Let $C_i$ be the sandwich matrix for $J_i$.   By assumption
\begin{equation}\label{model}
kR_i\xrightarrow{\, C_i\,}kL_i^*\longrightarrow 0
\end{equation}
is exact.  Let $V$ be a simple module $kG_i$-module.  Since the tensor product is right exact, tensoring $V$ with \eqref{model} yields the exact sequence
\[\Ind_i(V) = V\otimes_{kG_i} kR_i\longrightarrow V\otimes_{kG_i} kL_i^* = \Coind_i(V)\longrightarrow 0.\]  Since any non-zero homomorphism $\Ind_i(V)\to \Coind_i(V)$ has image $\til V$ by Proposition~\ref{computesimple}, it follows that $\til V=\Coind_i(V)$.  This shows that $kS$ is directed.

Suppose conversely that $kS$ is directed.  Let $C_i$ be the structure matrix of $J_i$.  Since we are dealing with finite dimensional algebras, to show that $C_i$ is left invertible, it suffices to show that $C_i\colon kR_i\to kL^*_i$ is onto.  Let $V$ be a simple $kG_i$-module.  Since $V$ is projective and hence flat, Theorem~\ref{RZsimple} implies that the image of $V\otimes C_i$ is $\til V=\Coind_i(V)$, that is $V\otimes C_i$ is onto.  Because $kG_i$ is semisimple, it follows that if $V_1,\ldots, V_s$ are the simple $kG_i$-modules and $m_1,\ldots, m_s$ are their corresponding multiplicities in $kG_i$, then $C_i = kG_i\otimes C_i=(m_1V_1\otimes C_i)\oplus\cdots\oplus (m_sV_s\otimes C_i)$ and hence is onto.  This completes the proof.
\end{proof}

The following corollary will be useful for computing quivers of directed semigroup algebras.

\begin{Cor}\label{killdowndirectedcase}
Suppose that $kS$ is directed.  Let $\til U,\til V$ be simple modules with respective apexes $J_i\geq _{\J}J_\ell$. Then $\Ext^n_{kS}(\til U,\til V)=0$ all $n\geq 1$.
\end{Cor}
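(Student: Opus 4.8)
The plan is to reduce the entire computation to group cohomology via the coinduction form of the Eckmann--Shapiro isomorphism (Lemma~\ref{shapiro}), using that directedness forces the costandard modules to be simple. First I would name the data: let $U=\til U e_i$ and $V=\til V e_\ell$ be the simple $kG_i$- and $kG_\ell$-modules giving rise to $\til U$ and $\til V$. Since $kS$ is directed, Proposition~\ref{directedcrit} gives $\til V=\Coind_\ell(V)$, so the task becomes to evaluate $\Ext^n_{kS}(\til U,\Coind_\ell(V))$ through the second isomorphism of Lemma~\ref{shapiro}.

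The one hypothesis that must be verified before invoking Lemma~\ref{shapiro} is that $\til U$ is a $kS/k\Jbelow{J_\ell}$-module, and this is the only step requiring genuine care. Since $\til U$ has apex $J_i$, it is annihilated by $\Jnotup{J_i}$. I would then observe that $\Jbelow{J_\ell}\subseteq\Jnotup{J_i}$: if $s<_{\J}e_\ell$ and we had $s\geq_{\J}e_i$, then $e_i\leq_{\J}s<_{\J}e_\ell\leq_{\J}e_i$ (using $J_\ell\leq_{\J}J_i$), forcing $e_i<_{\J}e_i$, which is absurd. Hence $\Jbelow{J_\ell}$ also annihilates $\til U$, so $\til U$ is indeed a $kS/k\Jbelow{J_\ell}$-module and the lemma applies.

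With this in place, Lemma~\ref{shapiro} yields the natural isomorphism
\[\Ext^n_{kS}(\til U,\til V)=\Ext^n_{kS}(\til U,\Coind_\ell(V))\cong \Ext^n_{kG_\ell}(\til U e_\ell,V).\]
I would then split according to the two possibilities in $J_i\geq_{\J}J_\ell$. If $J_i>_{\J}J_\ell$, then $e_\ell\not\geq_{\J}e_i$, so $e_\ell\in\Jnotup{J_i}$ and therefore $\til U e_\ell=0$, making the right-hand side vanish for every $n\geq 0$. If instead $J_i=J_\ell$, then $\til U e_\ell=\til U e_i=U$ and the right-hand side becomes $\Ext^n_{kG_i}(U,V)$, which vanishes for all $n\geq 1$ because $kG_i$ is semisimple (the characteristic of $k$ not dividing $|G_i|$). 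Combining the two cases gives $\Ext^n_{kS}(\til U,\til V)=0$ for all $n\geq 1$, as claimed. The main obstacle is thus structural rather than computational: recognizing that directedness (through the identification $\til V=\Coind_\ell(V)$) is precisely what lets Eckmann--Shapiro collapse all higher extensions into group cohomology, after which the semisimplicity of the maximal subgroups disposes of every degree $n\geq 1$.
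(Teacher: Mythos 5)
Your proposal is correct and follows essentially the same route as the paper: directedness gives $\til V=\Coind_\ell(V)$ via Proposition~\ref{directedcrit}, the coinduction half of Lemma~\ref{shapiro} converts $\Ext^n_{kS}(\til U,\til V)$ into $\Ext^n_{kG_\ell}(\til Ue_\ell,V)$, and semisimplicity of $kG_\ell$ kills all degrees $n\geq 1$. The only difference is cosmetic: your final case split ($J_i>_{\J}J_\ell$ versus $J_i=J_\ell$) is unnecessary, since semisimplicity of $kG_\ell$ already forces the vanishing uniformly, which is exactly how the paper concludes.
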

\begin{proof}
Since $\til U$ is a $kS/k\Jbelow{J_{\ell}}$-module and $\til V=\Coind_\ell(V)$, Lemma~\ref{shapiro} yields \[\Ext^n_{kS}(\til U,\til V) = \Ext^n_{kS}(\til U,\Coind_\ell(V)) \cong \Ext^n_{kG_\ell}(\til Ue_\ell,V)=0,\] for all $n\geq 1$, where the last equality follows from semisimplicity of $kG_\ell$.
\end{proof}

Putting together Theorem~\ref{quiverthm} and Corollary~\ref{killdowndirectedcase}, we see that if $kS$ is directed, then $\Ext^1_{kS}(\til U,\til V)\neq 0$ implies that the apex of $\til V$ must be strictly $\J$-above the apex of $\til U$.

We can also compute the Cartan invariants in the case $kS$ is directed, modulo group theory.  Let $k$ be an algebraically closed field.  The \emph{Cartan matrix} $C$ of a finite dimensional $k$-algebra $A$ is the matrix with entries indexed by the projective indecomposables and with $C_{PQ}=\dim\Hom_A(P,Q)$, or equivalently the multiplicity of $P/\mathrm{rad}(P)$ as a composition factor of $Q$~\cite{Benson}.

\begin{Thm}\label{Cartan1}
Let $S$ be a regular monoid and $k$ an algebraically closed field.  Assume that $kS$ is directed.  Let $V$ be a simple $kG_i$-module and $W$ a simple $kG_\ell$-module.  Set $P=\Ind_i(V)$, $Q=\Ind_\ell(W)$.  Then \[C_{PQ} = \begin{cases} \dim \Hom_{kG_i}(V,\Hom_{kS}(kS/k\Jbelow{J_i},Q)e_i) & J_i>_{\J} J_\ell \\ 1 & V=W\\ 0 &\text{otherwise.}\end{cases}\]  In particular, the Cartan matrix $C$ of $kS$ is a unipotent matrix.
\end{Thm}
\begin{proof}
First note that since $Q$ is a $kS/k\Jnotup{J_\ell}$-module, all of its composition factors have apex $\geq_{\J} J_{\ell}$.  Suppose that $J_i=J_{\ell}$.  Then $\Hom_{kS}(P,Q) = \Hom_{kG_i}(V,Qe_i) = \Hom_{kG_i}(V,W)$ and so in this case $C_{P,Q}=1$ if $V=W$ and $0$ otherwise.  Finally, suppose that $J_i>_{\J} J_{\ell}$.  Then since $P$ is a $kS/k\Jbelow{J_i}$-module
\begin{align*}
\Hom_{kS}(P,Q)&\cong \Hom_{kS/k\Jbelow{J_i}}(\Ind_i(V),\Hom_{kS}(kS/k\Jbelow{J_i},Q))\\ &\cong \Hom_{kG_i}(V,\Hom_{kS}(kS/k\Jbelow{J_i},Q)e_i)
\end{align*}
as required.  The final statement follows since we can define a partial order on the projective indecomposables by $\Ind_i(V)\geq \Ind_\ell(W)$ if and only if $J_i=J_{\ell}$ and $V=W$ or $J_i>_{\J} J_{\ell}$.  Hence the Cartan matrix is unitriangular with respect to an appropriate ordering of the projective indecomposables.\end{proof}

This theorem will be the starting point for a more detailed computation of the Cartan invariants for the algebra of a right regular band of groups (Theorem~\ref{Cartan2}).

\section{Right regular bands of groups}
In this section we specialize our results to right regular bands of
groups.  A \emph{right regular band
  of groups} (which we shall call an RRBG) is a regular semigroup for
which each right ideal is a two-sided ideal, or equivalently Green's
relations $\J$ and $\R$ coincide.  In particular, RRBGs include groups, right
regular bands, commutative regular semigroups and right simple
semigroups.

\subsection{The structure of right regular bands of groups}
Here we record the basic structural properties of right regular bands of groups.
For basic facts about finite semigroups, the reader is referred to~\cite[Appendix A]{qtheor} or to~\cite{Arbib,Almeida:book,CP}.
 If $s$ belongs to a finite
semigroup $S$, then $s^{\omega}$ denotes the unique idempotent
positive power of $s$.

\begin{Prop}\label{RRBGprops}
A semigroup $S$ is a right regular band of groups if and only if it satisfies
\begin{enumerate}
\item $s^{\omega}s=s$
\item $s^{\omega}ts^{\omega}=ts^{\omega}$
\end{enumerate}
all $s,t\in S$.
\end{Prop}
\begin{proof}
Suppose first that $S$ satisfies the above two properties.  By (1), each element of $S$ generates a cyclic group and so $S$ is clearly regular.  Suppose $R$ is a right ideal of $S$ and let $r\in R$ and $s\in S$.  Let $a\in S$ be such that $rar=r$.  Then $ra$ is an idempotent, and so by (2) $sr=srar=rasrar\in R$. Thus $R$ is a two-sided ideal.

Conversely, suppose $S$ is an RRBG.  Since $S$ is regular, $s=sts$
for some $t\in S$. The element $e=ts$ is an idempotent $\Lrel$-equivalent to
$s$.  On the other hand, $eS=SeS=SsS=sS$ and so $e\R s$.  Thus the
$\H$-class of $s$ is a group~\cite{qtheor,Arbib} and so
$s^{\omega}s=s$.

For (2), choose $n>1$ so that $x^n=x^{\omega}$ all $x\in S$.  Then we
have \[ts^{\omega}=(ts^{\omega})^{\omega}ts^{\omega}= (ts^{\omega})^{n-1}t(s^{\omega}ts^{\omega})\] and so
$s^{\omega}ts^{\omega}\Lrel ts^{\omega}$.  Since $\mathscr J=\mathscr R$,
it follows $s^{\omega}ts^{\omega}\R ts^{\omega}$.  But then
$ts^{\omega}\in s^{\omega}S$ and so $s^{\omega}ts^{\omega} =
ts^{\omega}$, as required.
\end{proof}

In particular, it follows that the class of RRBGs is closed under
product, subsemigroups and quotients.  Also a band is an RRBG if and
only if it is a right regular band in the sense that is satisfies the
identities $x^2=x$ and $xyx=yx$.
Recall that $E(S)$ denotes the idempotent set of $S$.

\begin{Cor}
Let $S$ be an RRBG.  Then $E(S)$ is a right regular band.
\end{Cor}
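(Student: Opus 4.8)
The plan is to verify the two defining identities of a right regular band---$x^2=x$ and $xyx=yx$---directly on $E(S)$, drawing on the characterization of RRBGs established in Proposition~\ref{RRBGprops}. The identity $x^2=x$ is automatic, since the elements of $E(S)$ are by definition idempotent. For the identity $xyx=yx$, observe that if $e,f\in E(S)$ then $e^{\omega}=e$ and $f^{\omega}=f$, so condition (2) of Proposition~\ref{RRBGprops} with $s=e$ and $t=f$ reads $efe=fe$, which is exactly the right regular band law. Thus, once we know $E(S)$ is a subsemigroup, both band axioms come for free.

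The one genuinely substantive point, then, is closure of $E(S)$ under multiplication. First I would fix $e,f\in E(S)$ and extract two instances of Proposition~\ref{RRBGprops}(2): taking $s=e$ gives $ete=te$ for all $t\in S$, and taking $s=f$ gives $ftf=tf$ for all $t\in S$. I would then compute $(ef)^2=efef$ and reassociate it as $(efe)f$; the first instance (with $t=f$) collapses $efe$ to $fe$, giving $fef$, and the second instance (with $t=e$) collapses $fef$ to $ef$. Hence $(ef)^2=ef$, so $ef\in E(S)$ and $E(S)$ is a subsemigroup of $S$.

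The step I expect to require the most care is precisely this closure computation, as it is the only place where both specializations of identity (2) must be combined and the reassociations tracked carefully; by contrast, once $ef$ is known to be idempotent the band axioms are immediate from the remarks above. I would conclude by assembling these observations: $E(S)$ is closed under the multiplication of $S$, every element squares to itself, and $efe=fe$ holds for all $e,f\in E(S)$, so $E(S)$ is a right regular band.
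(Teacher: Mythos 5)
Your proof is correct and follows essentially the same route as the paper: both establish closure of $E(S)$ under multiplication by using identity (2) of Proposition~\ref{RRBGprops} to show $efef=ef$ (the paper collapses $fef=ef$ and then uses $ee=e$, while you reassociate the other way via $efe=fe$ first), and both get the law $efe=fe$ directly from the same identity. The only cosmetic difference is that you spell out the band axioms explicitly, whereas the paper absorbs them into the remark that a band which is an RRBG is a right regular band.
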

\begin{proof}
By Proposition~\ref{RRBGprops}, if $e,f$ are idempotents, then
$efef=eef=ef$.  Thus $E(S)$ is a subsemigroup and hence a right
regular band.
\end{proof}

\begin{Cor}\label{retracttoe}
Let $S$ be an RRBG and $e\in E(S)$, then the map $s\mapsto se$ gives a
retraction from $S$ to $Se=eSe$.
\end{Cor}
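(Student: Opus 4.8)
The plan is to extract from Proposition~\ref{RRBGprops} the single identity that makes the whole statement fall out, namely that an idempotent absorbs conjugation on the right. Since $e\in E(S)$ we have $e^{\omega}=e$, so property (2) of Proposition~\ref{RRBGprops} applied with $s=e$ yields
\[ete=te\qquad\text{for all }t\in S.\]
Everything else will follow formally from this one equation, so the work is really just in recording it and then reading off three one-line consequences.

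First I would establish the identification $Se=eSe$. The inclusion $eSe\subseteq Se$ is immediate, and conversely for $te\in Se$ the identity above gives $te=ete\in eSe$; hence the two sets coincide. In passing this shows $Se$ is a subsemigroup, since $(se)(te)=s(ete)=ste\in Se$.

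Next I would verify that $\rho\colon S\to Se$ given by $\rho(s)=se$ is a homomorphism. Using $ete=te$ once more,
\[\rho(s)\rho(t)=(se)(te)=s(ete)=ste=\rho(st),\]
so $\rho$ is multiplicative, and its image is exactly $Se$ by definition. Finally, $\rho$ fixes $Se$ pointwise: every element of $Se$ has the form $se$, and $\rho(se)=se\cdot e=se^{2}=se$ because $e$ is idempotent. Together these three facts say precisely that $\rho$ is a retraction of $S$ onto the subsemigroup $Se=eSe$.

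I do not expect any genuine obstacle here. The only thing worth flagging is that the correct special case to invoke is Proposition~\ref{RRBGprops}(2) with $s=e$, which collapses to $ete=te$; once that identity is in hand, the equality $Se=eSe$, the multiplicativity of $\rho$, and the fact that $\rho$ is the identity on $Se$ are each a single computation, with the last using only idempotency of $e$.
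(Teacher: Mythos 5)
Your proof is correct and follows the same route as the paper: the paper's entire proof is the observation that Proposition~\ref{RRBGprops}(2) with $s=e$ gives $ete=te$, whence $ste=sete$, i.e.\ multiplicativity of $s\mapsto se$. You simply spell out the remaining details (the equality $Se=eSe$ and that the map fixes $Se$ pointwise) that the paper leaves implicit.
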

\begin{proof}
Indeed, from $ete=te$ we obtain $ste=sete$.
\end{proof}

The next lemma shows that the set of principal right ideals of an RRBG $S$ is
a meet semilattice grading $S$ in a natural way. For the case of a
right regular band (monoid), Brown and Saliola
call the dual of this lattice the support
lattice~\cite{Brown1,Brown2,Saliola}.  The lemma is a special case of a
result of Clifford~\cite[Chapter 4]{CP}.

\begin{Lemma}\label{grading}
Let $S$ be a right regular band of groups.  Then $aS\cap bS=abS$.  Hence the principal right ideals form a meet semilattice $\Lambda$ and the map $a\mapsto aS$ is a surjective homomorphism $S\to \Lambda$ whose fibers are the $\R$-classes.
\end{Lemma}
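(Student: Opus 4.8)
The plan is to reduce everything to the single identity $aS\cap bS=abS$ and then read off the three structural consequences essentially formally. I would prove the identity by the two inclusions. The inclusion $abS\subseteq aS\cap bS$ is the easy half: $abS\subseteq aS$ is immediate, and for $abS\subseteq bS$ I would use the defining property of an RRBG that every right ideal is two-sided. Thus $bS$ is in particular a left ideal, and since $b$ lies in its group $\H$-class $H_{b^\omega}$ we have $b=bb^{\omega}\in bS$; hence $ab\in S\cdot bS\subseteq bS$, and so $abS\subseteq bS$.

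The substance is the reverse inclusion $aS\cap bS\subseteq abS$, and this is where I expect the only real difficulty to lie. Fix $x\in aS\cap bS$ and write $x=as$. Using $a^{\omega}a=a$ from Proposition~\ref{RRBGprops}(1), I would first observe that $x$ is fixed by the idempotent $a^{\omega}$, namely $a^{\omega}x=a^{\omega}as=as=x$. The key idea is then to peel off the leading factor $a$ by multiplying by its inverse $a^{\omega-1}$ in the group $H_{a^{\omega}}$: since $a\,a^{\omega-1}=a^{\omega}$, we get $x=a^{\omega}x=a\,(a^{\omega-1}x)$. It remains to check that $y:=a^{\omega-1}x$ still lies in $bS$, for then writing $y=bw$ gives $x=ay=abw\in abS$. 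This is exactly the point at which the RRBG hypothesis is used a second time: $x\in bS$ and $bS$ is a two-sided, hence left, ideal, so $y=a^{\omega-1}x\in S\cdot bS\subseteq bS$. The trick of removing $a$ via the group inverse $a^{\omega-1}$ and then reabsorbing the result back into $bS$ through the two-sided ideal property is the whole crux; everything else is bookkeeping.

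Granting the identity, the three assertions follow formally. The intersection of the principal right ideals $aS$ and $bS$ is again principal, equal to $abS$, and it is the largest principal right ideal contained in both, so the principal right ideals ordered by inclusion form a meet-semilattice $\Lambda$ with $aS\wedge bS=abS$. The map $\alpha\colon S\to\Lambda$ given by $a\mapsto aS$ is onto by the very definition of $\Lambda$, and the computation $\alpha(ab)=abS=aS\cap bS=\alpha(a)\wedge\alpha(b)$ shows that it is a homomorphism onto the semilattice. Finally, $\alpha(a)=\alpha(b)$ means $aS=bS$; since $a\in aS$ in an RRBG we have $aS=aS^1$, so this equality is precisely $a\R b$, identifying the fibres of $\alpha$ with the $\R$-classes and completing the proof.
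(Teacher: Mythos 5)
Your proof is correct, but it takes a different route from the paper's. The paper first replaces $a$ and $b$ by the idempotents $e=a^{\omega}$ and $f=b^{\omega}$, noting $aS=eS$, $bS=fS$ and $abS=SabS=SefS=efS$ (two-sidedness of right ideals), and then both inclusions become one-line identity checks: $efS=fefS\subseteq fS$ by Proposition~\ref{RRBGprops}(2), and $x\in eS\cap fS$ gives $ex=x=fx$, so $x=efx\in efS$. You instead work directly with $a$ and $b$: the inclusion $abS\subseteq bS$ comes straight from the left-ideal property, and the hard inclusion is handled by the group-inverse trick $x=a^{\omega}x=a(a^{\omega-1}x)$ together with $a^{\omega-1}x\in S\cdot bS\subseteq bS$. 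Your argument never invokes identity (2) of Proposition~\ref{RRBGprops} and never reduces to idempotents, using only $s^{\omega}s=s$ and the defining two-sided-ideal property; it also sidesteps the chain $abS=SabS=SefS=efS$, which the paper asserts without detail and which itself needs small auxiliary arguments in the non-monoid setting. What the paper's reduction buys is transparency about the structure: it exhibits $\Lambda$ as the semilattice of idempotent-generated principal right ideals, tying the lemma to the right regular band $E(S)$ and to Clifford's theorem, which is the perspective the rest of the section exploits. Your derivation of the semilattice, homomorphism, and fiber statements from the identity is the same formal bookkeeping as the paper intends, and your observation that $a\in aS$ (so $aS=aS^1$ and the fibers really are $\R$-classes) is a point worth making explicit in the semigroup, as opposed to monoid, setting.
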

\begin{proof}
Let $e=a^{\omega}$ and $f=b^{\omega}$.  Then $eS=aS$, $fS=bS$ and
$abS=SabS=SefS=efS$.    Now by Proposition~\ref{RRBGprops}, $efS=fefS$
and so $efS\subseteq eS\cap fS$.  On the other hand, if $x\in eS\cap
fS$, then $ex=x, fx=x$ and so $efx=x$ and hence $x\in efS$.  Thus
$abS=efS=eS\cap fS=aS\cap bS$.
\end{proof}

Consequently, we have the following well-known result.

\begin{Cor}\label{jnotup}
Let $S$ be a RRBG and let $J$ be $\J$-class.  Then $S\setminus \Jnotup
J$ is a subsemigroup of $S$.
\end{Cor}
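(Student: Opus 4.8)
The plan is to reduce everything to the semilattice grading furnished by Lemma~\ref{grading}. Write $\varphi\colon S\to\Lambda$ for the surjective homomorphism $a\mapsto aS$ onto the meet semilattice $\Lambda$ of principal right ideals, whose fibers are the $\R$-classes and whose product is $aS\cdot bS=abS=aS\cap bS$. The first step I would carry out is to record that in an RRBG monoid one has $SsS=sS$ for every $s\in S$: indeed $sS$ is a right ideal, hence two-sided, and it contains $s=s\cdot 1$, so $SsS\subseteq sS$, while the reverse inclusion is automatic. Consequently $s\leq_{\J}t$ is equivalent to $sS\subseteq tS$, i.e.\ to $\varphi(s)\leq\varphi(t)$ in the inclusion order on $\Lambda$ (which is its natural semilattice order, since $aS\cap bS=aS$ iff $aS\subseteq bS$). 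Thus the $\leq_{\J}$-order on $S$ is exactly the pullback along $\varphi$ of the order on $\Lambda$.

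Next I would fix an idempotent $e\in J$ and rewrite the complement as $S\setminus\Jnotup J=\{s\in S\mid s\geq_{\J}e\}$, which is immediate from the definition of $\Jnotup J$. Translating through $\varphi$ using the previous step, the condition $s\geq_{\J}e$ is equivalent to $eS\subseteq sS$, that is to $\varphi(e)\leq\varphi(s)$ in $\Lambda$. Hence $S\setminus\Jnotup J=\varphi^{-1}(F)$, where $F=\{x\in\Lambda\mid x\geq\varphi(e)\}$ is the principal filter of $\varphi(e)$.

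The final step is a triviality about semilattices: in any meet semilattice the principal filter of an element is closed under the meet, since $x\geq\varphi(e)$ and $y\geq\varphi(e)$ force $x\wedge y\geq\varphi(e)$. Thus $F$ is a subsemigroup of $\Lambda$, and because $\varphi$ is a homomorphism its preimage $\varphi^{-1}(F)=S\setminus\Jnotup J$ is a subsemigroup of $S$, as desired. I do not expect any real obstacle here; the only point demanding care is the bookkeeping of order directions---confirming that ``$\J$-above'' corresponds to the \emph{up}-set rather than the down-set of $\varphi(e)$---together with the observation that $\Jnotup J$ is defined relative to a chosen idempotent of $J$, so that its complement is genuinely the filter $\varphi^{-1}(F)$. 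Everything else is routine.
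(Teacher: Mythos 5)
Your proof is correct and is essentially the paper's own argument: the paper states this corollary as an immediate consequence ("Consequently") of Lemma~\ref{grading}, and your write-up simply fleshes out that derivation — pull back the principal filter of $\varphi(e)$ along the grading homomorphism $\varphi\colon S\to\Lambda$, noting that filters in a meet semilattice are closed under meets. The only cosmetic caveat is your use of $s=s\cdot 1$ (monoid assumption) to get $s\in sS$; in a general RRBG semigroup one uses $s=s^{\omega}s\in sS$ instead, after which everything goes through verbatim.
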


\subsection{The algebra of a right regular band of groups}
Fix a monoid $S$ which is an RRBG and let $k$ be an algebraically
closed field such that the characteristic of $k$ does not divide the
order of any maximal subgroup of $S$. We retain the notation from the previous sections.  Since each $\J$-class $J_i$ of $S$ consists of a single $\R$-class, the sandwich matrix of each $\J$-class $J_i$ consists of a single column with non-zero entries from $G_i$ and hence is automatically left invertible over $kG_i$.  Therefore, $kS$ is directed by Theorem~\ref{isdirected}.  Hence the global dimension of $kS$ is bounded by the longest chain of non-zero $\J$-classes minus $1$ and the projective indecomposables are precisely the induced modules $\Ind_i(V)$ with $V$ a simple $kG_i$-module.  In this case, they have a particularly simple form.  If $J$ is a $\J$-class of $S$ with maximal subgroup $G$ and $V$ is a simple $kG$-module, then the associated projective indecomposable is $V\otimes_{kG}kJ$ where $s\in S$ acts on $kJ$ as multiplication by $s$ if $J\subseteq SsS$ and as the zero map if $J\nsubseteq SsS$.
The simple modules are the coinduced modules.

\subsubsection{The semisimple quotient}
For a right regular band, the semisimple quotient of its algebra is the algebra of a semilattice cf.~\cite{Brown1,Brown2}.  In the case of a right regular band of groups, one can replace the semilattice with a semilattice of groups, as was observed in~\cite{AMSV,Mobius1,Mobius2}.  The notion of a semilattice of groups is extremely important in semigroup representation theory.  For instance, it is shown in~\cite{AMSV} that a semigroup $S$ has a basic algebra over an algebraically closed field $k$ if and only if it admits a homomorphism to a semilattice of abelian groups inducing the semisimple quotient on the level of their semigroup algebras.  All this will be made explicit here for the case of RRBGs as we will need the details in order to compute the Cartan invariants explicitly.  We begin by recalling the notion of a semilattice of groups, a construction going back to Clifford~\cite{CP}.

Let $\Lambda$ be a  meet semilattice and $G\colon \Lambda^{op}\to \pv{Grp}$ be a presheaf of groups on $\Lambda$, that is, a contravariant functor from $\Lambda$ (viewed as a poset) to the category of groups.  If $e\leq f$, and $g\in G(f)$, then we write $g|_e$ for the image of $g$ under restriction map  $G(f)\to G(e)$.  Then one can place a semigroup structure on $T=\coprod_{e\in \Lambda} G(e)$ by defining the product of $g\in G(e)$ and $h\in G(f)$ by $gh = g|_{ef} h|_{ef}$~\cite{CP}.  Such a semigroup is called a \emph{semilattice of groups}.  Clifford showed that semilattices of groups are precisely the regular semigroups with central idempotents, or alternatively the inverse semigroups with central idempotents~\cite[Chapter 4]{CP}.  Recall that a semigroup $S$ is called \emph{inverse} if, for all $s\in S$, there exists a unique $s^*\in S$ with $ss^*s=s$ and $s^*ss^*=s^*$; equivalently  $S$  is inverse if it is regular and has commuting idempotents~\cite{CP}.  It is a well known result of Munn and Ponizovsky that if $S$ is an inverse semigroup and $k$ is a field such that the characteristic  of $k$ divides the order of no maximal subgroup of $S$, then $kS$ is semisimple~\cite{CP}.  In the case that $T$ is a semilattice of groups as above, it can be shown that $kT\cong \prod_{e\in \Lambda} kG(e)$ using a M\"obius inversion argument~\cite{Mobius1,Mobius2} .

Now let $S$ be again our fixed right regular band of groups and $k$ our algebraically closed field of good characteristic.  Let $\Lambda = S/{\J}=S/{\R}$ be the lattice of $\J$-classes, which is isomorphic to the lattice of principal right ideals cf.~Lemma~\ref{grading}.  We retain the notation of the previous sections: so $\Lambda=\{J_1,\ldots, J_n\}$ and we have fixed idempotents $e_1,\ldots,e_n$ representing the $\J$-classes with corresponding maximal subgroups $G_1,\ldots,G_n$.  Define $i\wedge \ell$, for $i,\ell\in \{1,\ldots,n\}$, by  the equation $J_i\wedge J_\ell=J_{i\wedge \ell}$.  We form a semilattice of groups $F\colon \Lambda^{op}\to \pv{Grp}$ by setting $F(J_i)=G_i$ and defining the restriction $G_i\to G_\ell$, for $J_\ell\leq_{\J} J_i$ by $g|_{J_\ell} = ge_\ell$.  It is immediate from stability and Corollary~\ref{retracttoe} that this restriction map is a homomorphism and that if $i=\ell$, then it is the identity map.  If $J_i\geq_{\J}J_{\ell}\geq_{\J} J_m$, then $e_\ell\geq_{\R}e_m$ and so $e_\ell e_m=e_m$.  Thus $(g|_{J_\ell})|_{J_m} = ge_\ell e_m = ge_m = g|_{J_m}$, establishing functoriality.  Let $T=\coprod_{J_i\in \Lambda} G_i$ be the corresponding inverse monoid.

There is a surjective homomorphism $\p\colon S\to T$ given by $\p(s)=se_i$ if $s\in J_i$.  Indeed, from Lemma~\ref{grading}, one has if $s\in J_i$ and $t\in J_\ell$, then $st\in J_{i\wedge \ell}$.  Moreover, denoting by $\cdot$ the product in $T$ and using $e_i,e_{\ell}\geq_{\R} e_{i\wedge\ell}$, we obtain \[\p(s)\cdot \p(t)=se_i\cdot te_j =se_ie_{i\wedge\ell}te_\ell e_{i\wedge\ell}=se_{i\wedge\ell}te_{i\wedge\ell} = ste_{i\wedge\ell}=\p(st)\] establishing that $\p$ is a homomorphism.  Suppose that $\p(s) = e_i$ for $s\in S$.  Then $s\in J_i$ and $se_i=e_i$.  Since $e_i\R s^{\omega}$, we have $s=ss^{\omega} = se_is^{\omega} = e_is^{\omega}=s^{\omega}$.  Thus $\p^{-1}(e_i)$ is the right zero semigroup $E(J_i)$.  It follows from~\cite[Theorem 3.5]{AMSV} that the induced surjective map $\Phi\colon kS\to kT$ has nilpotent kernel.  Since $kT\cong kG_1\times\cdots \times kG_n$ is semisimple, we conclude that $\Phi$ is the semisimple quotient.  One can describe the semisimple quotient directly by defining $\psi\colon kS\to kG_1\times\cdots \times kG_n$ by $\psi(s) = (g_1,\ldots,g_n)$ where \[g_i = \begin{cases} se_i & s\geq_{\J} J_i\\ 0 & \text{otherwise.}\end{cases}\]  See~\cite{Mobius1,Mobius2} for details.  Notice that $kS$ is basic if and only if each of its maximal subgroups is abelian.  In~\cite{AMSV}, the semigroups with basic algebras were determined for any field.

\subsubsection{The Cartan invariants}
Assume now that $k$ is an algebraically closed field of characteristic $0$.  Using the character formulas for multiplicities from~\cite{Mobius2}, we compute the Cartan invariants of $kS$.  We retain the above notation.  Let $\mu$ be the M\"obius function~\cite{Stanley} for the lattice $\Lambda$ of $\J$-classes.  Let $V$ be a simple $kG_i$-module and let $M$ be any $kS$-module.  Let $\chi_V$ be the character of $V$ and $\theta$ the character of $M$.  Then since $kT$ is the semisimple quotient of $kS$, it follows that $\theta$ factors through $\p$ as $\chi\p$ with $\chi$ the character of $M/\mathrm{rad}(M)$ as a representation of $T$.  Observing that the semilattice of idempotents $E(T)$ of $T$ is isomorphic to $\Lambda$, it follows from the formula in~\cite{Mobius2} for multiplicities of irreducible constituents in representations of inverse semigroups that the multiplicity of $\til V$ as a composition factor of $M$ is given by the formula
\begin{equation}\label{multformula}
\frac{1}{|G_i|}\sum_{g\in G_i}\chi_V(g\inv)\sum_{J_m\leq_{\J} J_i} \theta(ge_m)\mu(J_m,J_i).
\end{equation}

We apply \eqref{multformula} to compute the Cartan invariants for $kS$.  Let $P$ and $Q$ be projective indecomposables for $kS$.  We already know from Theorem~\ref{Cartan1} that $P= \Ind_i(V)$ and $Q=\Ind_\ell(W)$ for appropriate simple $kG_i$ and $kG_\ell$-modules $V$ and $W$ since $kS$ is directed. Moreover, the entry $C_{PQ}$ of the Cartan matrix is $0$ unless $J_i\geq_{\J} J_{\ell}$ and that if $J_i=J_{\ell}$, then $C_{PQ}=1$ if $V=W$ and $0$ if $V\neq W$.  All that remains then is to compute $C_{PQ}$ in the case $J_i>_{\J} J_{\ell}$.

\begin{Thm}\label{Cartan2}
Let $S$ be a right regular band of groups and $k$ an algebraically closed field of characteristic $0$.  Let $V$ be a simple $kG_i$-module and $W$ a simple $kG_\ell$-module with respective characters $\chi_V,\chi_W$. Set $P=\Ind_i(V)$ and $Q=\Ind_\ell(W)$.  Denote by $\mu$ the M\"obius function for the lattice of principal right ideals of $S$ . Let $C$ be the Cartan matrix of $kS$.  Then
 \[C_{PQ} = \begin{cases} \ast & J_i>_{\J} J_\ell \\ 1 & V=W\\ 0 &\text{otherwise.}\end{cases}\]
where \[\ast= \frac{1}{|G_i|}\sum_{g\in G_i}\chi_V(g\inv)\sum_{J_\ell\leq_{\J} J_m\leq_{J}J_i}\mu(J_m,J_i)\sum_{e\in E(J_\ell), (ege_m)^{\omega}=e}\chi_W(ege_{\ell}).\]
\end{Thm}
\begin{proof}
It only remains to consider the case that $J_i>_{\J} J_\ell$ by the remarks before the theorem.
The Cartan invariant $C_{PQ}$ is exactly the multiplicity of $\til V$ as a composition factor of the module $Q$.  Let $\theta$ be the character of $Q$.  Since $Q=W\otimes_{kG}kJ_\ell$, it follows that if $s\ngeq_{\J} J_\ell$, then $\theta(s)=0$.  Hence in our setting, the second sum in \eqref{multformula} can be taken over those $J_m$ with $J_{\ell}\leq_{\J} J_m\leq_{\J} J_i$.  To compute $\theta$, we observe that we can take $E(J_{\ell})$ as a set of representatives of the $\Lrel$-classes of $J_\ell$.  Then $kJ_{\ell}$ is a free left $kG_{\ell}$-module with basis $E(J_{\ell})$.  Let $b=|E(J_{\ell})|$.  The isomorphism of $kJ_\ell$ with $kG_\ell^b$ sends $x\in J$ to the row vector with $xe_\ell\in G_{\ell}$ in the coordinate indexed by $x^{\omega}$ and $0$ in all other coordinates.  The associated matrix representation of $S$ over $kG_{\ell}$ takes $s\geq_{J} J_{\ell}$ to the row monomial matrix $RM(s)$  which has its unique non-zero entry in the row corresponding to $e\in E(G_{\ell})$ in the column corresponding to $(es)^{\omega}$ and this entry is $ese_\ell\in G_{\ell}$.  This is just the Sch\"utzenberger representation by row monomial matrices over $G_\ell$~\cite{CP,Arbib,qtheor,RhodesZalc}.  Now if $\rho\colon G_\ell\to GL(k)$ is the irreducible representation afforded by $W$, then the matrix  $\rho\otimes RM(s)$ for $s$ acting on $W\otimes_{kG_\ell} kJ_\ell$ is obtained by applying $\rho$-entrywise to $RM(s)$~\cite{RhodesZalc}.  The block row $e$ has a diagonal entry if and only if $(es)^{\omega}=e$ and this entry is $\rho(ese_\ell)$.  Hence the trace of $\rho\otimes RM(s)$, that is $\theta(s)$, is given by  \[\sum_{e\in E(J_{\ell}), (es)^{\omega}=e}\chi_W(ese_{\ell}).\]  The formula for $C_{PQ}$ now follows from \eqref{multformula} and the observation  \mbox{$J_m\geq_{\J} J_\ell$} implies $e_m\geq_{\R}e_\ell$ and so $e_me_\ell =e_\ell$.
\end{proof}

In the case that $S$ is a right regular band, the modules $V$ and $W$ are trivial and the above formula reduces to Saliola's result~\cite{Saliola}.

\subsection{The quiver of a right regular band of groups}
Set $A=kS$ and let $e\in E(S)$.  From Proposition~\ref{RRBGprops},  we easily deduce the following lemma.

\begin{Lemma}\label{prop2algebra}
Let $e\in E(S)$ and $a\in A$.  Then $eae=ae$.  In particular, $eAe=Ae$ and the map $a\mapsto ae$ is a retraction from $A$ onto $eAe=Ae$.
\end{Lemma}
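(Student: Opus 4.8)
The plan is to reduce the whole statement to a single semigroup identity and then extend it linearly to $A=kS$. The identity I would first record is $ete=te$, valid for every idempotent $e\in E(S)$ and every $t\in S$. This is immediate from Proposition~\ref{RRBGprops}: since $e$ is idempotent we have $e^{\omega}=e$, so applying property~(2) with its element $s$ taken to be $e$ (so that $s^{\omega}=e$) gives $ete=e^{\omega}te^{\omega}=te^{\omega}=te$. This is precisely the identity already exploited in the proof of Corollary~\ref{retracttoe}, and it carries the entire computational content of the lemma.

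Next I would pass from $S$ to $A=kS$ by linearity. Writing $a=\sum_{s\in S}c_s s$ with $c_s\in k$, the identity above gives $eae=\sum_{s}c_s\,ese=\sum_{s}c_s\,se=ae$, which proves $eae=ae$ for all $a\in A$. The remaining assertions are then formal. On the one hand, the corner subalgebra $eAe=\{eae\mid a\in A\}$ coincides with $\{ae\mid a\in A\}=Ae$, giving $eAe=Ae$. On the other hand, the map $\pi\colon A\to A$, $a\mapsto ae$, has image exactly $Ae=eAe$ and fixes $Ae$ pointwise, since $(a'e)e=a'e^{2}=a'e$. To see that $\pi$ is an algebra retraction, I would use the extended identity $ebe=be$ (valid for $b\in A$) to compute $\pi(a)\pi(b)=ae\,be=a(ebe)=a(be)=(ab)e=\pi(ab)$, while $\pi(1)=e$ is the identity of $eAe$; thus $\pi$ is a unital algebra homomorphism onto $eAe=Ae$ that restricts to the identity there.

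I do not expect a genuine obstacle here: once the one-line identity $ete=te$ is in hand, everything reduces to linear extension and a routine check of multiplicativity. The only points requiring any care are to invoke property~(2) of Proposition~\ref{RRBGprops} with the correct element in the role of $s$ (namely $e$, using $e^{\omega}=e$), and to note that the very same identity, now applied to an element $b\in A$, is exactly what is needed to verify that $\pi$ respects products.
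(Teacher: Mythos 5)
Your proof is correct and follows exactly the route the paper intends: the paper deduces this lemma directly from Proposition~\ref{RRBGprops} (the identity $ete=te$, already used to prove Corollary~\ref{retracttoe}) and leaves the linear extension to $A=kS$ and the routine multiplicativity check implicit, which is precisely what you have written out. Nothing is missing, and no different idea is involved.
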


Let us recall that the restriction functor $M\mapsto Me$ from $\module{A}$ to
$\module{eAe}$ admits a right adjoint, the functor $V\mapsto
\Hom_{eAe}(Ae,V)=\wh{V}$.  Since $eAe=Ae$ by the above lemma, as a
vector space  $\Hom_{eAe}(Ae,V)$ is just $V$ (identify $f$ with
$f(e)$). The computation \[(fa)(e) = f(ae)=f(eae) = f(e)eae\] shows that
the module action of $A$ on $V$ is given by $va = veae$, i.e., one
makes $V$ an $A$-module via the retraction in
Lemma~\ref{prop2algebra}. In particular, this action extends the
action of $eAe$ and hence $V$ is simple if and only if $\wh{V}$ is
simple.  Evidently the functor $V\mapsto \wh{V}$ is exact and
$\wh{V}e=V$.  The following is a special case of Green's theory~\cite[Chapter 6]{Greenpoly}.

\begin{Lemma}\label{babygreen}
Let $A=kS$ with $S$ an RRBG and let $e\in E(S)$.
Up to isomorphism, the simple $A$-modules $M$ with $Me\neq 0$ are  the
modules of the form $\wh{V}$ with $V$ a simple $eAe$-module. More
precisely, if $M$ is  a simple $A$-module with $Me\neq 0$, then $Me$ is a simple $eAe$-module and $M=\wh{Me}$.
\end{Lemma}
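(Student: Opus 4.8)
The statement has two halves; the first is essentially formal given the discussion preceding it, while the second carries the substance. For the easy inclusion, let $V$ be a simple $eAe$-module. The remarks before the lemma already record that $\wh V$ is then a simple $A$-module and that $\wh V e = V \neq 0$, so every $\wh V$ is a simple $A$-module not annihilated by $e$. It therefore suffices to prove the converse in the sharp form asserted: for a simple $A$-module $M$ with $Me \neq 0$, that $Me$ is a simple $eAe$-module and that $M \cong \wh{Me}$.

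First I would show that $Me$ is simple over $eAe$. Take a nonzero $eAe$-submodule $N \subseteq Me$. Since $A$ is unital, $N \subseteq NA$, and simplicity of $M$ forces $NA = M$, whence $Me = NAe$. By Lemma~\ref{prop2algebra} we have $Ae = eAe$, so each generator $nae$ of $NAe$ equals $n(ae)$ with $ae \in eAe$; as $N$ is an $eAe$-submodule, $n(ae) \in N$, giving $NAe \subseteq N$. Thus $Me = NAe \subseteq N \subseteq Me$, that is $N = Me$, and $Me$ is simple.

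Next I would construct the isomorphism $M \cong \wh{Me}$ explicitly, exploiting the RRBG identity $eae = ae$ from Lemma~\ref{prop2algebra} rather than invoking the abstract adjunction. Using the vector-space identification $\wh{Me} = Me$ from the discussion above --- under which $a \in A$ acts by $v \cdot a = v(eae)$ --- define $\eta \colon M \to \wh{Me}$ by $\eta(m) = me$; this is manifestly surjective onto $Me$. For $A$-linearity one computes $\eta(ma) = mae$ and $\eta(m) \cdot a = (me)(eae) = m(eae)$, and these coincide precisely because $eae = ae$. Being a nonzero (as $Me \neq 0$) $A$-linear surjection out of the simple module $M$, the map $\eta$ is an isomorphism, so $M \cong \wh{Me}$, completing the identification.

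The only genuinely non-formal input --- and the step I expect to carry the weight --- is Lemma~\ref{prop2algebra}: the equalities $eAe = Ae$ and $eae = ae$, special to right regular bands of groups, are exactly what make the inclusion $NAe \subseteq N$ automatic and render the assignment $m \mapsto me$ an $A$-module map. Without them one is back in the general Green setting, where these points require real care; here they collapse to the RRBG structure, and everything else is the routine bookkeeping that a nonzero map out of a simple module is injective and that a nonzero submodule of a simple module is the whole module. I anticipate no further obstacle.
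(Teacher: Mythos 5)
Your proof is correct, and its skeleton matches the paper's: show $Me$ is simple, produce a nonzero $A$-homomorphism $M\to \wh{Me}$, and conclude by simplicity. But the two halves are implemented differently, and the comparison is instructive. For simplicity of $Me$, the paper uses the cyclic-generation argument: for nonzero $m\in Me$ one has $m=me$, so $meA=mA=M$ and hence $m(eAe)=meAe=Me$, i.e., every nonzero element of $Me$ generates it over $eAe$. This works for \emph{any} idempotent in \emph{any} finite dimensional algebra, with no appeal to the RRBG structure; your submodule argument via $NAe\subseteq N$ genuinely needs $Ae=eAe$, so it trades generality for the convenience of the special identity. For the isomorphism, the paper simply invokes the adjunction to get $\Hom_A(M,\wh{Me})\cong \Hom_{eAe}(Me,Me)\neq 0$ and then applies Schur-type simplicity of both $M$ and $\wh{Me}$ (the latter coming from the discussion preceding the lemma). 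Your explicit map $\eta(m)=me$ is exactly the image of $\mathrm{id}_{Me}$ under that adjunction --- the unit of the adjunction evaluated at $M$ --- so you have unwound the abstract step into a concrete verification, with the RRBG identity $eae=ae$ doing the work that the adjunction hides. What the paper's route buys is brevity and the recognition that, apart from the identification of $\wh{V}$ with $V$ carrying a twisted action, this is a general fact of Green's theory (as the paper notes, it is a special case of~\cite[Chapter 6]{Greenpoly}); what your route buys is self-containedness and an explicit isomorphism, at the cost of tying both halves of the argument to the RRBG hypothesis.
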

\begin{proof}
Assume $M$ is a simple $A$-module with $Me\neq 0$.  Let $m\in
Me$ be non-zero.  Then $meA = mA=M$, so $meAe=Me$.  Thus $Me$ is
simple.  Now $\Hom_A(M,\wh{Me}) =
\Hom_{eAe}(Me,Me)\neq 0$.  Thus $M\cong \wh{Me}$ by simplicity.
\end{proof}

Since $Ae=eAe$ is a free $eAe$-module, the Eckmann-Shapiro Lemma (our Lemma~\ref{Ecksha})
immediately yields.
\begin{Lemma}\label{shapiro2}
If $M$ is an $A$-module and $N$ is an $eAe$-module, then there is an
isomorphism $\Ext^n_A(M,\wh{N})\cong \Ext_{eAe}^n(Me,N)$ for all
$n\geq 0$.
\end{Lemma}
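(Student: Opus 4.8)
The plan is to derive this as an immediate specialization of the second isomorphism in the Eckmann--Shapiro Lemma (Lemma~\ref{Ecksha}), taking the subalgebra there to be $B=eAe$. The key observation, recorded just before the statement, is that $\wh N=\Hom_{eAe}(Ae,N)$, so $\wh N$ is visibly of the shape $\Hom_B(Ae,-)$ that occurs in Lemma~\ref{Ecksha}. Thus the entire task reduces to checking the one hypothesis demanded by that lemma and to matching up which module plays which role.

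First I would verify the hypothesis, namely that $Ae$ is projective as a right $eAe$-module. By Lemma~\ref{prop2algebra} we have $Ae=eAe$, and $eAe$ is the free right $eAe$-module of rank one, hence projective; so the hypothesis of the second part of Lemma~\ref{Ecksha} holds with $B=eAe$. Next I would line up the variables carefully. In the statement of Lemma~\ref{Ecksha} the $A$-module is named $N$ and the $B$-module is named $M$; in our situation these roles are played by our $M$ (an $A$-module) and our $N$ (an $eAe$-module), respectively. With $B=eAe$, the second isomorphism of Lemma~\ref{Ecksha} then reads
\[
\Ext^n_A\bigl(M,\Hom_{eAe}(Ae,N)\bigr)\cong \Ext^n_{eAe}(Me,N),
\]
and since $\Hom_{eAe}(Ae,N)=\wh N$ by definition, this is precisely the asserted isomorphism $\Ext^n_A(M,\wh N)\cong \Ext^n_{eAe}(Me,N)$ for all $n\geq 0$.

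There is no substantive obstacle here: the result is a direct instance of Lemma~\ref{Ecksha}. The only points demanding any care are the bookkeeping in the previous paragraph, where the symbols $M$ and $N$ trade places between the two lemmas, and the (trivial) verification that the freeness of $Ae=eAe$ over $eAe$ supplies the required projectivity. Everything else is formal, and the naturality of the resulting isomorphism is inherited from the naturality asserted in Lemma~\ref{Ecksha}.
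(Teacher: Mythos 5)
Your proof is correct and is exactly the paper's argument: the paper also derives the lemma directly from the second isomorphism of Lemma~\ref{Ecksha} with $B=eAe$, noting that $Ae=eAe$ is a free (hence projective) right $eAe$-module. You have merely spelled out the role-swapping of $M$ and $N$, which the paper leaves implicit.
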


In particular, we obtain the following corollary.

\begin{Cor}\label{cutdowntomonoid}
Let $A=kS$ with $S$ an RRBG and let $e\in E(S)$.  Suppose $M,N$ are
$A$-modules with $N$ simple and $Ne\neq 0$.  Then $\Ext^n_A(M,N)\cong \Ext^n_{eAe}(Me,Ne)$.
\end{Cor}
\begin{proof}
By Lemma~\ref{babygreen}, $N\cong \wh{Ne}$ and so $\Ext^n_A(M,N)=\Ext^n_A(M,\wh{Ne})\cong \Ext^n_{eAe}(Me,Ne)$, the last isomorphism coming from Lemma~\ref{shapiro2}.
\end{proof}

Assume now that $k$ is an algebraically closed field.
The (Gabriel) quiver of $kS$ is the directed graph with vertex set the
simple $kS$-modules and with $\dim \Ext^1_{kS}(\til{U},\til{V})$ arrows
from  $\til{U}$ to $\til{V}$.
 Suppose $\til{U}$ and $\til{V}$ have apexes $J_i,J_\ell$,
 respectively. By Corollary~\ref{quiverthm} and Corollary~\ref{killdowndirectedcase}, there are no arrows
 $\til{U}\to \til{V}$ unless
 $J_i<_{\J} J_{\ell}$.  Since $e_i<_{\J} e_{\ell}$ and $\til{U}e_i\neq
 0$, it follows that $\til{U}e_{\ell}\neq 0$. Also
 $\til{V}e_{\ell}\neq 0$.  Hence Corollary~\ref{cutdowntomonoid} implies that to compute the number of arrows between $\til{U}$ and
 $\til{V}$ (in both directions) we may assume that $S$ is a monoid and
 $\til{V}$ has apex the unit group.
By Corollary~\ref{jnotup}, $T=S\setminus \Jnotup {J_i}$ is a submonoid of $S$
(necessarily a RRBG) and clearly $kT\cong kS/k\Jnotup {J_i}$.  As a
consequence of Lemma~\ref{exttostrongidemideal}, it follows that to
compute $\Ext^1_{kS}(\til{U},\til{V})$, we may assume that $J_i$ is the
minimal ideal of $S$ (note that we may have to replace $e_i$ with the idempotent $e_ie_{\ell}$ in our computations when cutting down to $Se_{\ell}$).

Summarizing, we have reduced the computation of the quiver of a right
regular band of groups $S$ (in good characteristic) to the following
situation:  we have a simple module $\til{U}$ with apex the minimal
ideal $J$ of $S$ and a simple module $\til{V}$ with apex the
unit group $G$ of $S$ where, moreover, $G\neq J$.    Our goal is to compute
$\Ext^1_{kS}(\til U,\til V)$.

Let $e$ be an idempotent in $J$ and set $H=eJe$; it is the maximal
subgroup at $e$.  Then $eS=J$ and $Se=H$, that is, $J$ is the $\R$-class of $H$ and $H$ is its own $\Lrel$-class.   We write $\Ind$ and $\Coind$ for the induction and
coinduction functors adjoint to the restriction $\module{kS}\to
\module{kH}$ given by $M\mapsto Me$.  As $kH=kSe$, we have $kH^*=\Hom_{kH}(kSe,kH) = \Hom_{eAe}(Ae,kH)=\wh{kH}$.  Hence we can identify $kH$ with $kH^*$ as a vector space.  The $kH$-$kS$-bimodule structure is given by the regular left action of $kH$ and by the right action $xa=xeae$ for $x\in kH$ and $a\in kS$.

\begin{Prop}\label{gettingthere}
The map  $f\colon kJ\to kH$ given by $f(x) = xe$ is a surjective homomorphism of $kH$-$kS$-bimodules.  The kernel $N$ has basis consisting of all differences of the form $x-xe$ with $x\in J\setminus H$.
\end{Prop}
\begin{proof}
It follows immediately from Lemma~\ref{prop2algebra} that $f$ is a
morphism of bimodules.  Clearly it is onto since it restricts to the
identity on $kH$. Morover, $kJ\cong N\oplus kH$ and the
splitting $kJ\to N$ sends $x$ to $x-xe$.  In particular, the elements
of the form $x-xe$ with $x\in J\setminus H$ span $N$.  Since
$\mathrm{dim}\ N= |J|-|H|$, the above set is indeed a basis.
\end{proof}

\begin{Rmk}
The map $f$ is easily verified to be the map given by the sandwich matrix of $J$ if we choose to represent each $\Lrel$-class of $J$ by its idempotent.
\end{Rmk}

Let us set $I=S\setminus G$.  Then $I$ is an ideal of $S$.   Our next
goal is to identify $N/NI$ as a $kH$-$kG$ bimodule. The approach is inspired by Saliola~\cite{Saliola}.  Define a relation $\smallsmile$ on $J$ by $x\smallsmile x'$ if:
\begin{enumerate}
\item $xe=x'e$; and
\item there exists $w\in I\setminus J$ with $xw=x$ and $x'w=x'$.
\end{enumerate}
Let $\approx$ be the equivalence relation generated by
$\smallsmile$. It follows easily that if $x\approx x'$, then $xe=x'e$.
In particular, distinct elements of $H$ are never equivalent. Notice that if $S=J\cup G$, then $\smallsmile$ relates no elements.  We remark that in (2), one may always assume $w$ is idempotent by replacing it with $w^{\omega}$.

\begin{Prop}\label{biaction}
If $g\in G$ and $x\smallsmile x'$, then $xg\smallsmile x'g$.  If $h\in H$, then $hx\smallsmile hx'$
\end{Prop}
\begin{proof}
Suppose $x\smallsmile x'$.  By definition $xe=x'e$ and $xw=x, x'w=x'$ some $w\in I\setminus J$.  Then $xge=xege=x'ege=x'ge$. Also, since $g\inv wg\in I\setminus J$, we have $xg(g\inv wg)=xwg=xg$ and $x'g(g\inv wg)=x'g$.    For the second statement, evidently $hxe=hx'e$ and $hxw=hx,hx'w=hx'$.  This establishes the proposition.
\end{proof}

Let us set $X=J/{\approx}$.  It follows from
Proposition~\ref{biaction} that $X$ admits a left action of $H$ and a right action of $G$ that commute and hence $kX$ is a $kH$-$kG$ bimodule.  There
is an epimorphism $\epsilon\colon kX\to kH$ of $kH$-$kG$
bimodules given by $[x]\mapsto xe$ where $[x]$ denotes the
$\approx$-class of $[x]$. Surjectivity comes about as $\epsilon ([h])=h$ for $h\in H$; it is a morphism of right $kG$-modules since $xge=xege$.  Let $M=\ker \epsilon$.  Then $M$ is also a
$kH$-$kG$ bimodule.  By Corollary~\ref{retracttoe}, there is a
homomorphism $\psi\colon G\to H$ given by $\psi(g)=ge$.  The character of
$M$ as a $kG$-module is the permutation character associated to the action of $G$ on $X$
minus the character of $kH$ viewed as a right $kG$-module via $\psi$.
It is easy to see that $M$ has a basis consisting of the elements of the form $[x]-[xe]$ where $[x]$ runs over all equivalence classes not containing an element of $H$. In particular, $\mathrm{dim}\ M=|X|-|H|$.   Let $N$ be as in Proposition~\ref{gettingthere}.

\begin{Lemma}\label{thebigone}
As $kH$-$kG$ bimodules, $N/NI\cong M$.
\end{Lemma}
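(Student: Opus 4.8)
The plan is to factor the map $f$ of Proposition~\ref{gettingthere} through the quotient by $\approx$. Let $\pi\colon kJ\to kX$ be the linear map induced by $x\mapsto [x]$. By Proposition~\ref{biaction} the relation $\approx$ is invariant under the left $H$-action and the right $G$-action on $J$, so $\pi$ is a surjective morphism of $kH$-$kG$-bimodules, and by construction $\epsilon\circ\pi=f$ as linear maps, since $\epsilon([x])=xe=f(x)$. Consequently $N=\ker f=\pi\inv(M)$, and as $0\in M$ we have $\ker\pi\subseteq N$; since $\pi$ is onto, it restricts to a surjection $N\onto M$ of $kH$-$kG$-bimodules whose kernel is $\ker\pi$. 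Thus $N/\ker\pi\cong M$ as $kH$-$kG$-bimodules, and the whole lemma reduces to proving the equality of subspaces $\ker\pi=NI$.

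For this I would first record the two spanning sets. The space $\ker\pi$ is spanned by the differences $x-x'$ with $x\smallsmile x'$ (the differences over all of $\approx$ span the same subspace, and $\approx$ is generated by $\smallsmile$), while $NI$ is spanned by the elements $(y-ye)w=yw-yew$ with $y\in J$ and $w\in I$ (recall $Jw\subseteq J$ since $J$ is the minimal ideal). I would then prove the two inclusions separately.

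For $NI\subseteq\ker\pi$ it suffices to check that $yw\approx yew$ for all $y\in J$ and $w\in I$. If $w\in J$, write $w=es$; then $yw=yes$ and $yew=yees=yes$, so $yw=yew$ outright. If instead $w\in I\setminus J$, I claim $yw\smallsmile yew$. Condition (1) holds because $ewe=we$ by Lemma~\ref{prop2algebra}, whence $(yw)e=ywe=yewe=(yew)e$; and condition (2) holds with witness $w^{\omega}\in I\setminus J$, since $ww^{\omega}=w$ (as $w^{\omega}w=w$ by Proposition~\ref{RRBGprops}(1) and powers of $w$ commute) gives $(yw)w^{\omega}=yw$ and $(yew)w^{\omega}=yew$. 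In either case $yw\approx yew$, so $(y-ye)w\in\ker\pi$.

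For the reverse inclusion, suppose $x\smallsmile x'$, so $xe=x'e$ and there is $w\in I\setminus J$ with $xw=x$ and $x'w=x'$. Using $xw=x$ I get $(x-xe)w=x-xew$, and using $x'w=x'$ together with $x'e=xe$ I get $(x'-x'e)w=x'-x'ew=x'-xew$; subtracting yields exactly $x-x'$. Since $x-xe$ and $x'-x'e$ lie in $N$ and $w\in I$, this exhibits $x-x'\in NI$, giving $\ker\pi\subseteq NI$. The one genuinely delicate step is the forward inclusion: verifying condition (1) of $\smallsmile$ for the pair $(yw,yew)$ rests squarely on the retraction identity $ewe=we$, and producing the witness requires $w^{\omega}$ to land in $I\setminus J$ and to fix $yw$ and $yew$ on the right. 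Once these RRBG identities are in hand, the bimodule bookkeeping and the reverse inclusion are purely formal.
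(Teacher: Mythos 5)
Your proof is correct, and its two substantive verifications are exactly the ones driving the paper's own proof: that $yw$ and $yew$ become identified in $X$ for $y\in J$, $w\in I$, and that $x\smallsmile x'$ forces $x-x'\in NI$. What differs is the linear-algebra wrapper. The paper defines a map $T\colon N\to M$ on the basis $\{x-xe\}$, checks by hand that it is a surjective bimodule morphism, proves $NI\subseteq\ker T$, and then—rather than proving the reverse inclusion of kernels—finishes with a dimension count: it shows $N/NI$ is spanned by a set of size $\dim M$ indexed by a transversal of $\approx$, so the induced surjection $N/NI\to M$ must be an isomorphism (the spanning step uses the same computation as your reverse inclusion). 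You instead factor $f$ through $\pi\colon kJ\to kX$, for which the bimodule property and surjectivity are automatic from Proposition~\ref{biaction}, and reduce the lemma to the kernel equality $\ker\pi=NI$, proving both inclusions outright; no basis of $M$ and no counting are needed, at the cost of invoking the standard fact that the kernel of the linearization of a quotient map of sets is spanned by differences of equivalent elements. A second genuine simplification is your case $w\in J$ in the forward inclusion: from $J=eS$ you get $ew=w$, hence $yew=yw$ on the nose, whereas the paper reaches the same conclusion via stability, $\mathscr{H}$-equivalence of $xw$ and $xew$, and idempotent powers. The two routes are interchangeable in content, but yours is leaner bookkeeping; the paper's has the mild virtue of being self-contained at the level of explicit bases.
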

\begin{proof}
Let $\Omega$ be a transversal to $\approx$ containing $H$ and let
$\ov{x}$ be the element of $\Omega$ equivalent to $x$ for $x\in J$.
Define a linear map $T\colon N\to M$ on the basis by $x-xe\mapsto [x]-[xe]$.
This is clearly a surjective map since the elements of the form $[x]-[xe]$ span $M$.
Notice that if $x,x'\in J$ with $xe=x'e$, then $T(x-x')=T(x-xe)
+T(x'e-x') = [x]-[xe] +[x'e]-[x'] =[x]-[x']$.  Consequently, if $h\in
H$ and $g\in G$, then as $ge=ege$ \[T(h(x-xe)g) = T(hxg-hxeg) = [hxg]-[hxeg] =
h[x]g-h[xe]g\] so $T$ is a bimodule homomorphism.

Next, we establish $NI\subseteq \ker T$.  Indeed, if $w\in I$, then we
claim that, for any $x\in J$, either $xw=xew$ or $xw\smallsmile xew$. Clearly
$xwe=xewe$.  Suppose that
$w\in J$.  Then $xw\H xew$ by stability and so
$(xw)^{\omega}=(xew)^{\omega}$, whence
\[xw=xw(xw)^{\omega}=xw(xew)^{\omega}=xew(xew)^{\omega}=xew\] where the penultimate equalty uses $ze=eze$ all $z\in S$. If
$w\notin J$, then $xw = xww^{\omega}$, $xew=xeww^{\omega}$ and
$w^{\omega}\in I\setminus J$, establishing $xw\smallsmile xew$.  Thus
$T((x-xe)w) = [xw]-[xew]=0$.  Elements of the form $(x-xe)w$ with $x\in J, w\in I$ span $NI$,
yielding the inclusion  $NI\subseteq \ker T$.  To show that the induced map $T\colon N/NI\to M$ is an isomorphism, we show that $\mathrm{dim}\ N/NI\leq \mathrm{dim}\ M$.  To this effect, we show that $N/NI$ is spanned by elements of the form $x-xe+NI$ with $x\in \Omega\setminus H$.  The number of such elements is $\mathrm{dim}\ M$.

We begin by showing that if $x\approx x'$, then $x-x'\in NI$.  Suppose first
$x\smallsmile x'$.  Then since $xe=x'e$, we have $x-x'\in N$.  Also
there exists $w\in I\setminus J$ so that $xw=x,x'w=x'$.  Then
$x-x'=(x-x')w\in NI$. In general, there exist $x=x_0\smallsmile
x_1\smallsmile x_2\smallsmile \cdots\smallsmile x_n=x'$.  Then
\[x-x'=(x_0-x_1)+(x_1-x_2)+\cdots+(x_{n-1}-x_n)\in NI,\] as
required. Next suppose  $x\approx x'$ and $u\approx u'$.  Then we
claim $x-u+NI=x'-u'+NI$.  Indeed, $x-u = x-x'+x'-u'+u'-u$ and
$x-x',u'-u\in NI$, as was already observed.  In particular, for $x\in
J$ we have $x-xe+NI = \ov{x}-\ov xe+NI$ (since $\ov xe=xe$).  Because
elements of the form $x-xe+NI$ span $N/NI$, we are done.
\end{proof}

It follows that $U\otimes_{kH} N/NI\cong U\otimes_{kH} M$ as
$kG$-modules.  We are now ready to finish up our computation of the quiver of an RRBG.

\begin{Thm}\label{mainquiverthmrrbg}
Retaining the previous notation, $\mathrm{dim}\ \Ext^1_{kS}(\til U,\til V)$ is the multiplicity of $V$ as a composition factor in the $kG$-module $U\otimes_{kH} M$.
\end{Thm}
\begin{proof}
We have an exact sequence of $kH$-$kS$ bimodules \[0\longrightarrow
N\longrightarrow kJ\longrightarrow kH\longrightarrow 0.\]
Since $kH$ is semisimple, all $kH$-modules are projective and hence
flat and so there results an exact sequence of $kS$-modules
\[0\longrightarrow U\otimes_{kH} N\longrightarrow U\otimes_{kH}kJ\longrightarrow U\otimes_{kH} kH\longrightarrow 0.\]  Recall that $H$ is its own $\Lrel$-class, $J$ is the $\R$-class of $H$ and we are identifying $kH^*$ with $kH$.  Thus the middle term is $\Ind(U)$, whereas the rightmost term is $\Coind(U)=\til U$ (since $kS$ is directed).    Because $\Ind(U)$ has simple top, it follows that $\mathrm{rad}(\Ind(U))=U\otimes_{kH} N$. Theorem~\ref{quiverthm} implies in our setting
\begin{align*}
\Ext^1_{kS}(\til U,\til V)&= \Hom_{kG}(\mathrm{rad}(\Ind(U))/\mathrm{rad}(\Ind(U))I,V)\\
&= \Hom_{kG}(\mathrm{rad}(\Ind(U))\otimes_{kS} kS/kI,V)\\
&= \Hom_{kG}(U\otimes_{kH} N\otimes_{kS}kS/kI,V)\\
&=  \Hom_{kG}(U\otimes_{kH} N/NI,V)\\
&\cong \Hom_{kG}(U\otimes _{kH} M,V)
\end{align*}
which is the desired multiplicity as $kG$ is semisimple and $k$ is algebraically closed.
\end{proof}

\section{Examples}
From the results of the previous section it follows that one can in principle compute the quiver over the complex numbers of a right regular band of groups provided one has the character tables of its maximal subgroups.  The algorithm reduces to computing the number of arrows from a simple module with apex the minimal ideal to a simple module with apex the group of units.  This section provides a number of examples. We retain throughout the notation of the previous section.

\subsection{Right regular bands}
Our first example is the case of a right regular band $S$.  In this case, if $U$ is a simple module with apex  the minimal ideal and $V$ is a simple module with apex the singleton $\J$-class of the identity, then all the groups involved are trivial and the representations are trivial.  Thus the number of arrows from $U$ to $V$ is just $\dim M=|X|-1$.  This is exactly Saliola's result~\cite{Saliola}.

\subsection{Permutation groups with adjoined constant maps}
Next suppose that $G\leq S_n$ is a permutation group of degree $n$. Let $\ov G$ consist of $G$ along with the constant maps on $\{1,\ldots, n\}$.  Then $\ov G$ is an RRBG with group of units $G$ and minimal ideal $J$ consisting of the constant maps.  The Krohn-Rhodes Theorem~\cite{PDT,Arbib,qtheor,Eilenberg} implies that every finite semigroup is a quotient of a subsemigroup of an iterated wreath product of semigroups of the form $\ov G$ with $G$ a finite simple group.  Putcha computed the quiver of any regular monoid with exactly $2$ $\J$-classes in terms of decomposing group representations via his method of monoid quivers~\cite{Putcharep3} and so the examples in this subsection could also be handled via his methods.

Let $k$ be an algebraically closed field such that the characteristic of $k$ does not divide the order of $G$.  The simple modules for $\ov G$ are the trivial module $k$ and the simple $kG$-modules $V_1,\dots, V_s$, which become $k\ov G$-modules by having the constants act as the zero map.  Assume that $V_1$ is the trivial $kG$-module.   Since $\ov G = G\cup J$, the relation $\smallsmile$ is empty and hence $X=J$.  So $kX$ can be identified with the permutation module of $G$ coming from the embedding $G\leq S_n$.  Then as a $kG$-module, $M$ is just the result of removing one copy of $V_1$ from the permutation module.  So if the permutation module $kX$ decomposes as $kX=\bigoplus_{i=1}^s m_iV_i$, then there are $m_1-1$ arrows from $k$ to $V_1$ and $m_i$ arrows from $k$ to $V_i$, for $i>1$.   Since there are no directed paths of length $2$ or more in the quiver, it follows that $k\ov G$ is a hereditary algebra.

Assume now that $k$ has characteristic zero.
We provide a complete characterization of the representation type of $k\ov G$ in the case that $G$ acts transitively.   Recall that the \emph{rank} of $G$, denoted $\mathop{\mathrm{rk}}G$, is the number of orbitals of $G$, or equivalently the number of orbits of a point stabilizer~\cite{dixonbook,cameron}.  It is well known that the rank of $G$ is the sum of the squares of the multiplicities of the irreducible constituents of the associated permutation module~\cite{cameron}.  In particular, if the rank is at most $8$, the permutation module must be multiplicity-free. By Gabriel's Theorem~\cite{Benson,assem}, a hereditary algebra has finite representation type if and only if each connected component of the underlying graph of its quiver is Dynkin of type $A$, $D$ or $E$; the algebra has tame representation type if and only if each component is a Euclidean diagram of type $\til A$, $\til D$ or $\til E$.

Let's consider the structure of our quiver.  By transitivity of $G$, the trivial representation of $G$ appears exactly once in the permutation module and so there are no arrows from $k$ to $V_1$.  In general, the quiver consists of isolated points and the connected component $C$ of the trivial $k\ov G$-module $k$, which is a star.  From our description of the quiver, it follows that if $G$ has rank $1$ (and so $n=1$), then $C$ is $A_1$; if $G$ has rank $2$, then $C$ is $A_2$; if $G$ has rank $3$, then $C$ is $A_3$; if $G$ has rank $4$, then $C$ is $D_4$; if the rank of $G$ is $5$, then $C$ is $\til D_4$; and if the rank of $G$ is at least $6$, then $C$ is neither Dynkin nor Euclidean of the above types.  We have thus proved:

\begin{Thm}
Let $G$ be a transitive permutation group and $k$ an algebraically closed field of characteristic zero.  Then:
\begin{enumerate}
\item $k\ov G$ is of finite representation type if and only if $\mathop{\mathrm{rk}} G\leq 4$;
\item $k\ov G$ is of tame representation type if and only if $\mathop{\mathrm{rk}}G=5$;
\item $k\ov G$ is of wild representation type if and only if $\mathop{\mathrm{rk}}G\geq 6$.
\end{enumerate}
\end{Thm}

Since $G$ is $2$-transitive if and only if it has rank $2$, we deduce the following corollary, first proved by Ponizovsky~\cite{ponireptype}.

\begin{Cor}
Let $G$ be a $2$-transitive permutation group.  Then $k\ov G$ is of finite representation type.
\end{Cor}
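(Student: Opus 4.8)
The plan is to read this off directly from the preceding theorem once we pin down the rank of a $2$-transitive group. Recall that for a transitive permutation group $G$ of degree $n$, the rank $\mathop{\mathrm{rk}} G$ is the number of orbits of a point stabilizer $G_\alpha$ acting on $\{1,\ldots,n\}$. Since $G_\alpha$ necessarily fixes $\alpha$, one of these orbits is the singleton $\{\alpha\}$, so $\mathop{\mathrm{rk}} G\geq 2$ as soon as $n\geq 2$.

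The key step is the elementary observation that $G$ is $2$-transitive exactly when $G_\alpha$ acts transitively on the remaining $n-1$ points. Indeed, $2$-transitivity means $G$ is transitive on ordered pairs of distinct points, and given transitivity of $G$ on points this is equivalent to $G_\alpha$ being transitive on $\{1,\ldots,n\}\setminus\{\alpha\}$. Hence a $2$-transitive group has precisely two point-stabilizer orbits, namely $\{\alpha\}$ and its complement, giving $\mathop{\mathrm{rk}} G=2$. In the degenerate case $n=1$ the group has rank $1$ and is vacuously $2$-transitive, which only helps us.

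With this translation in hand the corollary is immediate: since $\mathop{\mathrm{rk}} G=2\leq 4$ (or $\mathop{\mathrm{rk}} G=1$ when $n=1$), part (1) of the theorem yields that $k\ov G$ is of finite representation type. I expect no genuine obstacle at this stage, as the entire content has been front-loaded into the theorem, whose proof rests on Gabriel's classification together with the identification of the relevant quiver component as a star whose edge count is governed by the multiplicities of the irreducible constituents of the permutation module $kX$. The only thing that remains to check here is the standard group-theoretic equivalence between $2$-transitivity and having rank $2$, which is purely combinatorial.
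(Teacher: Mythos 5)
Your proof is correct and follows exactly the paper's route: the paper deduces the corollary from the preceding theorem via the observation that a permutation group is $2$-transitive if and only if it has rank $2$, which is precisely the equivalence you verify. Spelling out that a point stabilizer of a $2$-transitive group has exactly the two orbits $\{\alpha\}$ and its complement is the standard justification, so there is nothing missing.
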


Notice that computing the quiver of $k\ov G$ is equivalent to decomposing the permutation module $kX$ into irreducibles and so one cannot hope to do better than compute the quiver of a right regular band of groups modulo decomposing group representations.

\subsection{Hsiao's algebra}
Next we want to compute the quiver of Hsiao's algebra~\cite{Saliolafriend}.  To each finite group $G$, Hsiao associates a left regular band of groups $\Sigma_n^G$ (i.e., a regular semigroup in which each left ideal is two-sided).  The group $S_n$ acts by automorphisms on $\Sigma_n^G$ and Hsiao showed that if $G$ is abelian, then the invariant algebra $\mathbb{C}\Sigma_n^{S_n}$ is anti-isomorphic to the descent algebra for $G\wr S_n$ of Mantaci and Reutenauer~\cite{MantReut}, a generalization of Bidigare's result for $S_n$~\cite{BHR,Brown2}; see~\cite{Saliolafriend,Thibon} for the non-abelian case.  Our goal is to compute the quiver for the opposite algebra to $k\Sigma_n^G$ when $k$ has good characteristic.  To be consistent with~\cite{Saliolafriend,Saliola,BHR,Brown1,Brown2} we shall work with $k\Sigma_n^G$ and left modules.  So the vertices of our quiver will be the simple left $k\Sigma_n^G$-modules and the number of arrows from $U$ to $V$ will be $\dim \mathrm{Ext}^1_{k\Sigma_n^G}(U,V)$, where now we work in the category of finitely generated left $k\Sigma_n^G$-modules.  Of course, this quiver is the usual quiver for algebra of the opposite semigroup of $\Sigma_n^G$, which is a right regular band of groups,  hence the results of the previous section apply.  We remind the reader of the construction of Hsiao's left regular band of groups $\Sigma_n^G$.

Fix a finite group $G$ with identity $1_G$ and an integer $n\geq 1$. Set $[n]=\{1,\ldots,n\}$.   An \emph{ordered $G$-partition} of $n$ consists of a sequence
\begin{equation}\label{gpartition}
\tau=((P_1,g_1),\ldots,(P_r,g_r))
\end{equation}
 where $\mathscr P=\{P_1,\ldots, P_r\}$ is a set partition of $\{1,\ldots,n\}$ and $g_1,\ldots,g_r\in G$.   The monoid $\Sigma_n^G$ consists of all ordered $G$-partitions
of $n$ with multiplication given by
\begin{gather*}
((P_1,g_1),\ldots,(P_r,g_r))((Q_1,h_1),\ldots,(Q_s,h_s))=\\
((P_1\cap Q_1,g_1h_1), (P_1\cap Q_2,g_1h_2),\ldots,(P_r\cap Q_1,g_rh_1),\ldots,(P_r\cap Q_s,g_rh_s))
\end{gather*}
where empty intersections are omitted.   In fact, in~\cite{Saliolafriend} Hsiao writes $h_ig_j$ instead of $g_jh_i$, but it is easy to see that our semigroup is isomorphic to his using the inversion in the group $G$.  The identity element of $\Sigma_n^G$ is $([n],1_G)$.  One can compute directly that
\[((P_1,g_1),\ldots,(P_r,g_r))\J((Q_1,h_1),\ldots,(Q_s,h_s))\] if and only if $r=s$ and the set partitions $\{P_1,\ldots,P_r\}$ and  $\{Q_1,\ldots,Q_r\}$ are equal; so $\J$-classes are in bijection with set partitions of $n$.  We write $J_{\mathscr P}$ for the $\J$-class corresponding to a set partition $\mathscr P$.  In fact, the $\J$-order is precisely the usual refinement order on set partitions: $\mathscr P\leq \mathscr Q$ if and only if each block of $\mathscr P$ is contained in a block of $\mathscr Q$.  Let us write $\mathscr P\prec \mathscr Q$ if $\mathscr P$ is covered by $\mathscr Q$ in this ordering, that is, one can obtain $\mathscr Q$ from $\mathscr P$ by joining together two blocks.

An element $\tau$ as per \eqref{gpartition} is idempotent if and only if $g_1=\cdots=g_r=1_G$.  The maximal subgroup at $\tau$ in this case is isomorphic to $G^r$.   The semigroup $\Sigma_n^G$ is a left regular band of groups satisfying the identities $x^{|G|+1}=x$ and $xyx^{|G|}=xy$~\cite{Saliolafriend}.
In particular, if $k$ is an algebraically closed field whose characteristic does not divide $|G|$, then we can apply our techniques to $k\Sigma_n^G$.  So from now on we assume that $k$ is such a field.

Let $X$ be a set.  Define an \emph{$X$-labelled set partition} of $n$ to be a subset $\{(P_1,x_1),\ldots, (P_r,x_r)\}$ of $2^{[n]}\times X$ such that $\{P_1,\ldots, P_r\}$ is a set partition of $n$. We are now ready to describe the quiver of $k\Sigma_n^G$.

\begin{Thm}\label{quiverofgpartitions}
Let $G$ be a finite group and $k$ an algebraically closed field such that the characteristic of $k$ does not divide $|G|$.   Denote by $\Irr G$ the set of simple left $kG$-modules.  Then, for $n\geq 1$, the quiver of $k\Sigma_n^G$ has vertex set the $\Irr G$-labelled set partitions of $n$.   Let $\{(P_1,V_1),\ldots,(P_r,V_r)\}$ be an $\Irr G$-labelled set partition.  We specify the outgoing arrows from this vertex as follows.  Let $U\in \Irr G$ and  $1\leq i\neq j\leq r$.  Then there are  $ \dim \Hom_{kG}(U,V_i\otimes_k V_j)$ (i.e., the multiplicity of $U$ as a composition factor in $V_i\otimes_k V_j$)
arrows from $\{(P_1,V_1),\ldots,(P_r,V_r)\}$ to \[\{(P_i\cup P_j,U)\}\cup \{(P_1,V_1),\ldots, (P_r,V_r)\}\setminus \{(P_i,V_i),(P_j,V_j)\}.\]
\end{Thm}
\begin{proof}
In what follows we do not distinguish between a set partition of $n$ and the corresponding equivalence relation on $[n]$.
The $\J$-classes of $\Sigma_n^G$ are in bijection with set partitions.   If $\mathscr P=\{P_1,\ldots,P_r\}$ is a set partition, the maximal subgroup of the corresponding $\J$-class $J_{\mathscr P}$ is isomorphic to the group $G_{\mathscr P} = G^{[n]/\mathscr P}$.  If $e=((P_{i_1},1_G),\ldots, (P_{i_r},1_G))$ is an idempotent of $J_{\mathscr P}$, then the isomorphism $G_e\to G_{\mathscr P}$ takes $((P_{i_1},g_{i_1}),\ldots, (P_{i_r},g_{i_r}))$ to $f\colon [n]/P\to G$ given by $f(P_i) = g_i$.

Now the simple $kG_{\mathscr P}$-modules are in bijection with $\Irr G$-labelled set partitions via the correspondence \[\{(P_1,V_1),\ldots, (P_r,V_r)\}\}\longmapsto V_1\otimes_k\cdots\otimes_k V_r\] with the tensor product action: $f\cdot v_1\otimes\cdots\otimes v_r = f(P_1)v_1\otimes \cdots \otimes f(P_r)v_r$; see~\cite{curtis}.  It follows that the vertices of the quiver of $k\Sigma_n^G$ can be identified with $\Irr G$-labelled partitions.  In this proof, it will be convenient to work with this ``coordinate-free'' model of the maximal subgroups of $\Sigma_n^G$, rather than fixing an idempotent from each $\J$-class.  We will then change idempotent representatives of the $\J$-classes as is convenient and always use our fixed isomorphisms to the $G_{\mathscr P}$ to translate between simple modules from a given maximal subgroup and $\Irr G$-labelled set partitions.

Let us write $\pi$ for the projection $2^{[n]}\times \Irr G\to 2^{[n]}$.  So $\pi$ takes an $\Irr G$-labelled set partition to its ``underlying'' partition.  Fix $\Irr G$-labelled set partitions $P$ and $Q$ and let $\mathscr P=\pi(P)$, $\mathscr Q=\pi(Q)$.
Since the $\J$-order is given by $J_{\mathscr P}\leq_{\J} J_{\mathscr Q}$ if and only if $\mathscr P\leq \mathscr Q$ in the refinement order,  the results of the previous section show there are no arrows $P\to Q$ unless $\mathscr P<\mathscr Q$.   Fix idempotents $e_{\mathscr P}\in J_{\mathscr P}$ and $e_{\mathscr Q}\in J_{\mathscr Q}$.  Replacing $e_{\mathscr P}$ by $e_{\mathscr Q}e_{\mathscr P}$ if necessary, we may assume that $e_{\mathscr P}\leq_{\R}e_{\mathscr Q}$ and hence $e_{\mathscr P}\in e_{\mathscr Q}J_{\mathscr P}$.  Because $e_{\mathscr P}<_{\Lrel} e_{\mathscr Q}$, as the $\J$ and $\Lrel$-orders coincide in a left regular band of groups, we have in fact $e_{\mathscr P}<e_{\mathscr Q}$ (recall that the idempotents of any semigroup are partially ordered by $e\leq f$ if and only if $ef=fe=e$~\cite{CP,qtheor}).

Suppose first that $\mathscr P<\mathscr Q$ but $\mathscr P\nprec \mathscr Q$.  Let us assume that $\mathscr P=\{P_1,\ldots,P_r\}$ and $\mathscr Q= \{Q_1,\ldots,Q_s\}$.  Let $H=G_{e_{\mathscr P}}$ be the maximal subgroup at the idempotent $e_{\mathscr P}$.  We show that the equivalence relation $\approx$ on $e_{\mathscr Q}J_{\mathscr P}$ has $|H|$ classes.  It will then follow that the bimodule $M$ constructed in the previous section is zero, as it has dimension $|X|-|H|$ where $X=e_{\mathscr Q}J/{\approx}$, and hence there are no arrows in the quiver from any vertex associated to $J_{\mathscr P}$ to any vertex associated to $J_{\mathscr Q}$.  To do this, we show that for any $\gamma\in e_{\mathscr Q}J$, one has that $\gamma\approx e_{\mathscr P}\gamma$.   This will show that each element of $e_{\mathscr Q}J$ is equivalent to an element of $H$.  Since distinct elements of $H$ are never identified under $\approx$, this will establish that ${\approx}$ has $|H|$ classes.

 In the proof, we shall need the following notation.  If $\sigma\in S_r$, then set
\begin{equation}\label{permnotation}
\sigma ((B_1,g_1),\ldots, (B_r,g_r)) = ((B_{\sigma(1)},g_{\sigma(1)}),\ldots,(B_{\sigma(r)},g_{\sigma(r)})).
\end{equation}
Without loss of generality we may assume  $e_{\mathscr P} = ((P_1,1_G),\ldots,(P_r,1_G))$.  The elements of $H$ are then precisely the elements $\tau$ of the form \eqref{gpartition}.  For $\gamma\in J_{\mathscr P}$,  it is easy to see that $e_{\mathscr P}\gamma= \tau$, with $\tau$ as per \eqref{gpartition}, if and only if there is a permutation $\sigma\in S_r$ so that $\gamma =\sigma\tau$ using the notation of \eqref{permnotation}.      The proof that ${\approx}$ has $|H|$ classes relies on two claims.

\begin{Claim}\label{claim1}
Suppose $\gamma_1,\gamma_2\in J_{\mathscr P}$ are such that  $e_{\mathscr P}\gamma_1=e_{\mathscr P}\gamma_2$ and there exists $\rho\in \Sigma_n^G$ with $\rho \ngeq_{\J} J_{\mathscr Q}$ and $\rho\gamma_i=\gamma_i$, $i=1,2$.  Then $e_{\mathscr Q}\gamma_1\approx e_{\mathscr Q}\gamma_2$.
\end{Claim}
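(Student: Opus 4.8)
The plan is to prove the stronger statement that $x_1:=e_{\mathscr Q}\gamma_1$ and $x_2:=e_{\mathscr Q}\gamma_2$ are related by $\smallsmile$ (and hence by $\approx$), by fabricating a witness for condition~(2) out of the given $\rho$. The obvious candidate, living in the cut-down monoid, is $w:=e_{\mathscr Q}\rho$. Throughout I would rely on two elementary facts: that $e_{\mathscr Q}ae_{\mathscr Q}=e_{\mathscr Q}a$ for every $a$ (the left-handed form of Lemma~\ref{prop2algebra}, available because $\Sigma_n^G$ is a left regular band of groups), and that $e_{\mathscr P}<e_{\mathscr Q}$ in the natural order, so that $e_{\mathscr P}e_{\mathscr Q}=e_{\mathscr P}$.

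With these in hand the two defining conditions are quick. For condition~(1) I compute $e_{\mathscr P}x_i=e_{\mathscr P}e_{\mathscr Q}\gamma_i=e_{\mathscr P}\gamma_i$, so the hypothesis $e_{\mathscr P}\gamma_1=e_{\mathscr P}\gamma_2$ gives $e_{\mathscr P}x_1=e_{\mathscr P}x_2$. For the fixing property I compute $wx_i=e_{\mathscr Q}\rho e_{\mathscr Q}\gamma_i=e_{\mathscr Q}\rho\gamma_i=e_{\mathscr Q}\gamma_i=x_i$, using $e_{\mathscr Q}\rho e_{\mathscr Q}=e_{\mathscr Q}\rho$ and $\rho\gamma_i=\gamma_i$. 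Finally, $w=e_{\mathscr Q}\rho\leq_{\J}\rho$ together with $\rho\ngeq_{\J}J_{\mathscr Q}$ forces $w\ngeq_{\J}J_{\mathscr Q}$, so $w$ is not a unit and therefore lies in $I$.

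The delicate point, which I expect to be the only real obstacle, is that a legitimate witness must lie in $I\setminus J$, i.e.\ one needs $w\notin J:=e_{\mathscr Q}J_{\mathscr P}$, whereas $e_{\mathscr Q}\rho$ could a priori drop into the minimal ideal. I would settle this by a dichotomy. If $w\notin J$, then $w\in I\setminus J$ and the computations above yield $x_1\smallsmile x_2$, hence $x_1\approx x_2$. If instead $w\in J$, I claim that $x_1=x_2$ outright, making the conclusion immediate. For this I would use that the minimal ideal $J=e_{\mathscr Q}J_{\mathscr P}$ is completely simple and, since $\J=\mathscr L$ in a left regular band of groups, consists of a single $\mathscr L$-class, hence is a left group. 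From $wx_i=x_i$ and stability (so $wx_i\J w$ gives $wx_i\R w$, that is $w\R x_i$) one sees that $w$ is the identity of the common $\H$-class of $x_1$ and $x_2$; since left multiplication by the idempotent $e_{\mathscr P}$ is injective on that $\H$-class, condition~(1) then collapses $x_1$ and $x_2$ to a single element. In either case $e_{\mathscr Q}\gamma_1\approx e_{\mathscr Q}\gamma_2$, which is the claim.
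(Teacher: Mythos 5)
Your proposal is correct and takes essentially the same route as the paper: the same witness $w=e_{\mathscr Q}\rho$, the same verification of the two conditions via $e_{\mathscr P}e_{\mathscr Q}=e_{\mathscr P}$ and $e_{\mathscr Q}\rho e_{\mathscr Q}=e_{\mathscr Q}\rho$, and the same dichotomy on whether $e_{\mathscr Q}\rho$ drops into $J_{\mathscr P}$. The only cosmetic difference is the degenerate case: where you invoke injectivity of left translation by $e_{\mathscr P}$ on the common $\H$-class (via Green's Lemma and the left-group structure of the minimal ideal), the paper establishes the same collapse $e_{\mathscr Q}\gamma_1=e_{\mathscr Q}\gamma_2$ by the explicit computation $e_{\mathscr Q}\gamma_1=fe_{\mathscr P}e_{\mathscr Q}\gamma_1=fe_{\mathscr P}e_{\mathscr Q}\gamma_2=e_{\mathscr Q}\gamma_2$ with $f=(e_{\mathscr Q}\gamma_i)^{\omega}$, which amounts to exhibiting a left inverse for that translation.
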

\begin{proof}
Because $e_{\mathscr P}<e_{\mathscr Q}$,  if $e_{\mathscr P}\gamma_1=e_{\mathscr P}\gamma_2$, then $e_{\mathscr P}e_{\mathscr Q}\gamma_1=e_{\mathscr P}\gamma_1=e_{\mathscr P}\gamma_2=e_{\mathscr P}e_{\mathscr Q}\gamma_2$.  Next observe that $e_{\mathscr Q}\rho<_{\J} J_{\mathscr Q}$.  Because $e_{\mathscr Q}\gamma_i = e_{\mathscr Q}\rho\gamma_i = e_{\mathscr Q}\rho e_{\mathscr Q}\gamma_i$, for $i=1,2$, in the case $e_{\mathscr Q}\rho\notin J_{\mathscr P}$ it is immediate that $e_{\mathscr Q}\gamma_1\smallsmile e_{\mathscr Q}\gamma_2$.  If $e_{\mathscr Q}\rho\in J_{\mathscr P}$, then $e_{\mathscr Q}\gamma_i= e_{\mathscr Q}\rho \gamma_i \R e_{\mathscr Q}\rho$ by stability.  Since $J_{\mathscr P}$ has a unique $\Lrel$-class, it follows that $e_{\mathscr Q}\gamma_1\H e_{\mathscr Q}\gamma_2$.  Let $f=(e_{\mathscr Q}\gamma_1)^{\omega} = (e_{\mathscr Q}\gamma_2)^{\omega}$.  Then $f\Lrel e_{\mathscr P}$ and so $fe_{\mathscr P}= f$.  Thus we have \[e_{\mathscr Q}\gamma_1 = fe_{\mathscr Q}\gamma_1 = fe_{\mathscr P}e_{\mathscr Q}\gamma_1=fe_{\mathscr P}e_{\mathscr Q}\gamma_2 = fe_{\mathscr Q}\gamma_2 = e_{\mathscr Q}\gamma_2.\]
This proves the claim.
\end{proof}

\begin{Claim}\label{claim2}
Given $(m\ m+1)\in S_r$  and $\alpha\in J_{\mathscr P}$,  there exists $\rho\in \Sigma_n^G$ with $\rho\ngeq_{\J} J_{\mathscr Q}$ and $\rho \alpha=\alpha, \rho (m\ m+1)\alpha = (m\ m+1)\alpha$ where we follow the notation of \eqref{permnotation}.
\end{Claim}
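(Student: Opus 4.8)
The plan is to exhibit $\rho$ as a single idempotent whose underlying set partition merges exactly the two blocks of $\mathscr P$ occupying positions $m$ and $m+1$ in $\alpha$, ordered so as to match $\alpha$. Write $\alpha=((P_{\sigma(1)},a_1),\ldots,(P_{\sigma(r)},a_r))$ with $\sigma\in S_r$ and $a_1,\ldots,a_r\in G$; this is the general form of an element of $J_{\mathscr P}$. I would set
\[\rho=((P_{\sigma(1)},1_G),\ldots,(P_{\sigma(m-1)},1_G),(P_{\sigma(m)}\cup P_{\sigma(m+1)},1_G),(P_{\sigma(m+2)},1_G),\ldots,(P_{\sigma(r)},1_G)),\]
which is idempotent since all of its labels are $1_G$. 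Its underlying partition $\mathscr R$ is obtained from $\mathscr P$ by joining the two distinct blocks $P_{\sigma(m)}$ and $P_{\sigma(m+1)}$, so $\mathscr P\prec \mathscr R$. There are then three things to check: that $\rho$ is idempotent (immediate), that $\rho\ngeq_{\J}J_{\mathscr Q}$, and that $\rho$ fixes both $\alpha$ and $(m\ m+1)\alpha$ on the left.

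For the $\J$-condition, recall that the $\J$-order on $\Sigma_n^G$ is refinement of the underlying partitions, so $\rho\geq_{\J}J_{\mathscr Q}$ would mean $\mathscr Q\leq \mathscr R$. But $\mathscr P<\mathscr Q$, so $\mathscr Q\leq \mathscr R$ together with the covering $\mathscr P\prec\mathscr R$ would force $\mathscr Q=\mathscr R$ and hence $\mathscr P\prec \mathscr Q$, contradicting the standing hypothesis $\mathscr P\nprec\mathscr Q$. Thus $\mathscr Q\nleq\mathscr R$ and $\rho\ngeq_{\J}J_{\mathscr Q}$. This is precisely where the non-covering hypothesis on $\mathscr P,\mathscr Q$ enters, and it is the conceptual heart of the claim.

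The remaining step is the verification that $\rho\alpha=\alpha$ and $\rho(m\ m+1)\alpha=(m\ m+1)\alpha$, which I expect to be the main, though routine, obstacle, since it requires tracking the lexicographic ordering built into the product of $\Sigma_n^G$. The point is that, since $\mathscr R\geq \mathscr P$, left multiplication by the idempotent $\rho$ sends any $\gamma\in J_{\mathscr P}$ back into $J_{\mathscr P}$ (the underlying partition of the product being the meet $\mathscr R\wedge\mathscr P=\mathscr P$) and simply re-lists the blocks of $\gamma$ sorted first by which block of $\rho$ contains them, in the order of $\rho$, and then by their original position in $\gamma$. Because $\rho$ was ordered to agree with $\alpha$, this recovers $\alpha$; and because $P_{\sigma(m)}$ and $P_{\sigma(m+1)}$ lie in the common block of $\rho$, interchanging them---which in the notation of \eqref{permnotation} is exactly the effect of $(m\ m+1)$---only permutes the two entries inside that common group and is then undone by the secondary sort on position, so $(m\ m+1)\alpha$ is fixed as well. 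Carrying out the product explicitly, intersecting the blocks of $\rho$ with those of $\alpha$ and of $(m\ m+1)\alpha$ and reading off the result in the prescribed order, confirms both equalities and completes the proof.
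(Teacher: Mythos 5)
Your proof is correct and takes essentially the same approach as the paper: you choose exactly the same $\rho$ (merge the blocks in positions $m$ and $m+1$ of $\alpha$, keep $\alpha$'s ordering, label everything by $1_G$), and you rule out $\rho\geq_{\J} J_{\mathscr Q}$ by the same covering argument, namely that $\mathscr P\prec\mathscr R$ together with $\mathscr P<\mathscr Q\leq\mathscr R$ would force $\mathscr Q=\mathscr R$, contradicting $\mathscr P\nprec\mathscr Q$. The only difference is that you spell out the lexicographic bookkeeping behind $\rho\alpha=\alpha$ and $\rho(m\ m+1)\alpha=(m\ m+1)\alpha$, which the paper compresses into the phrase ``routine computation.''
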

\begin{proof}
Suppose that  $\alpha =  ((P_{j_1},g_1),\ldots,(P_{j_r},g_r))$.  Let $\mathscr P'$ be the partition obtained from $\mathscr P$ by joining $P_{j_m}$ and $P_{j_{m+1}}$.  Since $\mathscr P\prec \mathscr P'$ and $\mathscr P\nprec \mathscr Q$, it follows that $J_{\mathscr P'}\ngeq_{\J} J_{\mathscr Q}$.  Routine computation shows that \[\rho = ((P_{j_1},1_G),\ldots, (P_{j_m}\cup P_{j_{m+1}},1_G),\ldots, (P_{j_r},1_G))\] does the job.
\end{proof}

To complete the proof in the case $\mathscr P\nprec \mathscr Q$, we must show that if  $\tau$ is as in \eqref{gpartition}  and  $\gamma=\sigma\tau\in e_{\mathscr Q}J_{\mathscr P}$ with $\sigma\in S_r$, then $\gamma\approx \tau$.   Since $S_r$ is generated by the consecutive transpositions,  we can connect $\tau$ to $\gamma$ by a sequence $\tau=\alpha_0,\alpha_1,\ldots, \alpha_n=\gamma$ where $\alpha_{i+1}=(m_i\ m_i+1)\alpha_i$ for some $m_i$,  all $i=0,\ldots,n-1$.  Then by Claims~\ref{claim1} and~\ref{claim2}, we have $\tau=e_{\mathscr Q}\alpha_0\approx e_{\mathscr Q}\alpha_1\approx\cdots\approx e_{\mathscr Q}\alpha_n=\gamma$.

Next suppose that $\mathscr P\prec \mathscr Q$.  Then we can order our $\Irr G$-labelled partitions $P$ and $Q$ so that $\mathscr P =\{P_1,\ldots,P_r\}$ and $\mathscr Q = \{P_1,\ldots, P_{r-2},P_{r-1}\cup P_r\}$.    To fix notation, we write
\begin{align*}
P&= \{(P_1,V_1),\ldots, (P_r,V_r)\}\\
Q&= \{(P_1,U_1),\ldots, (P_{r-2},U_{r-2}),(P_{r-1}\cup P_r,U)\}.
\end{align*}
We choose as representatives of  $J_{\mathscr P}$ and $J_{\mathscr Q}$ the respective idempotents $e_{\mathscr P} = ((P_1,1_G),\ldots, (P_r,1_G))$ and $e_{\mathscr Q} = ((P_1,1_G),\ldots, (P_{r-2},1_G),(P_{r-1}\cup P_r,1_G))$.   Notice that $e_{\mathscr P}< e_{\mathscr Q}$.   Then under our isomorphisms $G_{\mathscr P}\cong G_{e_{\mathscr P}}$ and  $G_{\mathscr Q}\cong G_{e_{\mathscr Q}}$ the simple $kG_{e_{\mathscr P}}$-module corresponding to $P$ is $V_1\otimes_k\cdots\otimes_k V_r$ and the simple $k G_{e_{\mathscr Q}}$-module corresponding to $Q$ is $U_1\otimes_k\cdots\otimes_k U_{r-2}\otimes_k U$.

Since $J_{\mathscr Q}$ covers $J_{\mathscr P}$ in the $\J$-order, the equivalence relation $\approx$ identifies no elements.  Therefore, $X=ke_{\mathscr Q}J_{\mathscr P}$.  Now it is not hard to compute that $e_{\mathscr Q}J_{\mathscr P}$ consists of all elements of the form $\tau$ and $(r-1\ r)\tau$ with $\tau$ as per \eqref{gpartition}.  Moreover, $e_{\mathscr P}\tau = \tau = e_{\mathscr P}(r-1\ r)\tau$. Therefore, the bimodule $M$ from the previous section  has a basis consisting of the elements of the form $(r-1\ r)\tau -\tau$ with $\tau$ as above. It is immediate that as a right $kG_{e_{\mathscr P}}$-module, $M$ is isomorphic to the regular module since if $\rho\in G_{e_{\mathscr P}}$ then $((r-1\ r)\tau) \rho = (r-1\ r)(\tau\rho)$.  On the other hand if
\begin{equation}\label{onemoretime}
\lambda = ((P_1,h_1),\ldots, (P_{r-2},h_{r-2}),(P_{r-1}\cup P_r,h))\in G_{e_{\mathscr Q}},
\end{equation}
then  $\lambda\big((r-1\ r)\tau -\tau\big)$ is $(r-1\ r)\beta - \beta$ where
\begin{equation}\label{betadude}
\beta = ((P_1,h_1g_1),\ldots, (P_{r-2},h_{r-2}g_{r-2}),(P_{r-1},hg_{r-1}),(P_r,hg_r))
\end{equation}
 as can be verified by direct computation.

Since $M$ is regular as a right $kG_{e_{\mathscr P}}$-module,  \[M\otimes_{kG_{e_{\mathscr P}}} (V_1\otimes_k\cdots\otimes_k V_r)\cong V_1\otimes_k\cdots\otimes_k V_r\] as a vector space.    By \eqref{betadude}, the action of $\lambda\in G_{e_{\mathscr Q}}$ on $V_1\otimes_k\cdots \otimes_k V_r$ is given on elementary tensors by \[\lambda\cdot v_1\otimes\cdots \otimes v_r = h_1v_1\otimes\cdots\otimes h_{r-2}v_{r-2}\otimes hv_{r-1}\otimes hv_r\] with $\lambda$ as per \eqref{onemoretime}.

Since the tensor product distributes over sums, it follows that the multiplicity of $U_1\otimes_k\cdots \otimes_k U_{r-2}\otimes U$  as a composition factor of $V_1\otimes_k\cdots\otimes_k V_r$  is $0$ unless $U_i=V_i$ for $1\leq i\leq r-2$, in which case it is the multiplicity of $U$ as a constituent in $V_{r-1}\otimes_k V_r$.  This completes the proof.
\end{proof}

For example, if $G$ is trivial, then the quiver of $k\Sigma_n^G$ is just the Hasse diagram of the lattice of set partitions of $n$, as was first proved by Saliola~\cite{Saliola} and Schocker~\cite{schocker}.  Suppose now that $G$ is a finite abelian group and the characteristic of $k$ does not divide $|G|$.  Let $\wh{G}$ be the dual group of $G$, that is, $\Hom_{\ZZ}(G,k^*)$.  Of course, $\wh{G}\cong G$ and $\Irr G\cong \wh{G}$.  The tensor product of representations corresponds to the product of the characters.  Thus the quiver of $k\Sigma^G_n$ has vertices all $\wh{G}$-labelled partitions of $n$.  A vertex $\{(P_1,\chi_1),\ldots,(P_r,\chi_r)\}$ has $\binom{r}{2}$ outgoing arrows:  for $1\leq i\neq j\leq r$,  there is an arrow from $\{(P_1,\chi_1),\ldots,(P_r,\chi_r)\}$ to $\{(P_i\cup P_j,\chi_i\cdot \chi_j)\}\cup \{(P_1,\chi_1),\ldots,(P_r,\chi_r)\}\setminus \{(P_i,\chi_i),(P_j,\chi_j)\}$.  In particular, the quiver is acyclic with no multiple arrows between vertices.

\section*{Acknowledgments}
We would like to thank Franco Saliola for several insightful discussions, as well as for pointing out to us Hsiao's work~\cite{Saliolafriend}.  He also provided helpful comments on a preliminary draft of this paper.

\bibliographystyle{abbrv}
\bibliography{standard2}
\end{document}